\newtheorem{theorem}{Theorem}[section]
\newtheorem{lemma}[theorem]{Lemma}
\newtheorem{corollary}[theorem]{Corollary}
\newtheorem{proposition}[theorem]{Proposition}
\theoremstyle{remark}
\newtheorem{remark}[theorem]{Remark}
\theoremstyle{remark}
\newcommand\Md{{\text{\rm Mod}}}
\newcommand\Set{{{\mathcal S}et}}
\newcommand{\Ima}{\mathop{\rm Im}\nolimits}
\newcommand{\ima}{\mathop{\rm im}\nolimits}
\newcommand\colim{\mathop{\rm colim}}
\newcommand\Eq{\mathop{\rm Eq}}
\newcommand\opp{{^{\text{\rm op}}}} 
\newcommand\CA{{\mathcal A}}
\newcommand\CB{{\mathcal B}}
\newcommand\CC{{\mathcal C}}
\newcommand\CE{{\mathcal E}}
\newcommand\CG{{\mathcal G}}
\newcommand\CS{{\mathcal S}}
\newcommand\id{{\bf 1}}
\begin{document}

\title{Localizations, colocalizations and non additive $*$-objects}

\author{George Ciprian Modoi}

\address{"Babe\c s-Bolyai" University\\ RO-400084, Cluj-Napoca
\\ Romania}

\thanks{The author was supported by the grant PN2CD-ID-489}

\email[George Ciprian Modoi]{cmodoi@math.ubbcluj.ro}

\subjclass[2000]{18A40, 20M50, 18G50} \keywords{adjoint pair;
localization; colocalization; cellular approximation; $*$-act over
a monoid}

\begin{abstract}  Given a pair of adjoint functors between two arbitrary
categories it induces mutually inverse equivalences between the
full subcategories of the initial ones, consisting of objects for
which the arrows of adjunction are isomorphisms. We investigate
some cases in which these subcategories may be better
characterized. One application is the construction of cellular
approximations. Other is the definition and the characterization
of (weak) $*$-objects in the non additive case.
\end{abstract}
\maketitle

\section*{Introduction}

In mathematics the concept of localization has a long history. The
origin of the concept is the study of some properties of maps
around a point of a topological space. In the algebraic sense, the
localization provides a method to invert some morphisms in a
category. Making abstraction of some technical set theoretic
problems, given a class of morphisms $\Sigma$ in a category $\CA$,
there is a category $\CA[\Sigma^{-1}]$ and a functor
$\CA\to\CA[\Sigma^{-1}]$ universal with the property that it sends
any morphism in $\Sigma$ to an isomorphism. This functor will be
called a {\em localization}, if it has a right adjoint, which will
be frequently fully faithful. Dually this functor is called a {\em
colocalization} provided that it has a left adjoint.

One of the starting point of this paper is the observation that
the consequences of the duality between localization and
colocalization were not exhausted. For example the concept,
borrowed from topology, of cellular approximation in arbitrary
category is a particular case of a colocalization, fact remarked
for example in \cite{FGS}. Some results concerning the cellular
approximation may be deduced in a formal, categorical way by
stressing this duality. On the other hand the same formal
techniques are useful in the study of so called $*$-modules,
defined as in \cite{CF}.

Now let us present the organization and the main results of the
paper. In the first section we set the notations, we define the
main notions used throughout of the paper and we record some easy
properties concerning these notions.

In Section \ref{eiaf} are stated the formal results, on which it
is based the rest of the paper. There are three main results here:
First Theorem \ref{rladj} where are given necessary and sufficient
conditions for a pair of adjoint functors to induce an equivalence
between the full subcategories consisting of colocal respectively
local objects with respect to these functors (for the definition
of a (co)local object see Section \ref{not}). Second and third
Proposition \ref{wstarob} and Theorem \ref{starob} which represent
the formal characterization of a non additive (weak) $*$-object.

Section \ref{repeq} contains a non additive version of a theorem
of Menini and Orsatti in \cite{MO}. Consider an object (or a set
of objects) $A$ of a category $\CA$, and the category of all
contravariant functors $[\CE\opp,\Set]$ where $\CE$ is the full
subcategory of $\CA$ containing the object(s) $A$, situation which
is less general but more comprehensive that the hypotheses of
Section \ref{repeq}. Under appropriate assumptions, mutually
inverse equivalences between two full subcategories of $\CA$ and
$[\CE\opp,\Set]$ are represented by $A$ in the sense that they are
realized by restrictions of the representable functor
$H_A=\CA(A,-)$ and of its left adjoint (see Theorem \ref{repres}).

Provided that $\CA$ is a cocomplete, well copowered, balanced
category with epimorphic images, and $A$ is a set of objects of
$\CA$, it is shown in Section \ref{ecc} that the inclusion of the
subcategory of $H_A$-colocal objects has a right adjoint (see
Theorem \ref{ctoa}). Consequently fixing an object $A$ in such a
category, every object $X$ will have an $A$-cellular
approximation.

In Section \ref{nonadd} we define and characterize the notions of
a (weak) $*$-act over a monoid, in Proposition \ref{wstaract} and
Theorem \ref{staract}, providing in this way a translation of the
notion of (weak) $*$-module in these new settings. It is
interesting to note that our approach may be continued by
developing a theory analogous with so called tilting theory for
modules. The Morita theory for the category of acts over monoids
is a consequence of our results.

\section{Notations and preliminaries}\label{not}

All subcategories which we consider are full and closed under
isomorphisms, so if we speak about a class of objects in a
category we understand also the respective subcategory. For a
category $\CA$ we denote by $\CA^\to$ the category of all
morphisms in $\CA$. We denote by $\CA(-,-)$ the bifunctor
assigning to any two objects of $\CA$ the set of all morphisms
between them.

Consider a functor $H:\CA\to\CB$. The (essential) image of $H$ is
the subcategory $\Ima H$ of $\CB$ consisting of all objects
$Y\in\CB$ satisfying $Y\cong H(X)$ for some $X\in\CA$. In contrast
we shall denote by $\ima\alpha$ the categorical notion of image of
a morphism $\alpha\in\CA^\to$. A morphism $\alpha\in\CA^\to$ is
called an {\em $H$-equivalence}, provided that $H(\alpha)$ is an
isomorphism. We denote by $\Eq(H)$ the subcategory of $\CA^\to$
consisting of all $H$-equivalences. An object $X\in\CA$ is called
{\em $H$-local} ({\em $H$-colocal}) if, for any $H$-equivalence
$\epsilon$, the induced map $\epsilon_*=\CA(\epsilon,X)$
(respectively, $\epsilon^*=\CA(X,\epsilon)$) is bijective, that
means it is an isomorphism in the category $\Set$ of all sets. We
denote by $\CC^H$ and $\CC_H$ the full subcategories of $\CA$
consisting of all $H$-local, respectively $H$-colocal objects. For
objects $X',X\in\CA$, we say that $X'$ is a {\em retract} of $X$
if there are maps $\alpha:X'\to X$ and $\beta:X\to X'$ in $\CA$
such that $\beta\alpha=1_{X'}$. We record without proof the
following properties relative to the above considered notions:

\begin{lemma}\label{2out3} The following hold:
\begin{itemize}
 \item[{\rm a)}] $\Eq(H)$ is closed under
retracts in $\CA^\to$. \item[{\rm b)}] $\Eq(H)$ satisfies the `two
out of three' property, namely if $\alpha,\beta\in\CA^\to$ are
composable morphisms, then if two of the morphisms
$\alpha,\beta,\beta\alpha$ are $H$-equivalences, then so is the
third. \item[{\rm c)}] The subcategory $\CC^H$ (respectively
$\CC_H$) is closed under limits (respectively colimits) and
retracts in $\CA$.
\end{itemize}
\end{lemma}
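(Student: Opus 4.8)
The three assertions are essentially soft diagram-chasing, so the plan is to unwind the definitions and exploit the adjunction between $\CA$ and $\Set$ that is implicit in the fact that $\CA(\epsilon,X)$ and $\CA(X,\epsilon)$ are hom-sets. For part a), suppose $\alpha$ is a retract of $\beta$ in $\CA^\to$, i.e.\ there is a commutative diagram exhibiting $\alpha$ as a retract, with $\beta\in\Eq(H)$. Applying the functor $H$ to the retract diagram, and using that $H$ preserves commutative squares and composites, one sees that $H(\alpha)$ becomes a retract of $H(\beta)$ in $\CB^\to$; since $H(\beta)$ is an isomorphism and a retract of an isomorphism in the arrow category is again an isomorphism (a retract of an iso in any category is an iso — split mono plus split epi with inverse pieces assembling), we get $H(\alpha)$ an isomorphism, i.e.\ $\alpha\in\Eq(H)$.

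For part b), this is immediate from the fact that a functor preserves composition: if $\alpha,\beta$ are composable then $H(\beta\alpha)=H(\beta)H(\alpha)$, and in any category, given two composable morphisms, if two of $f$, $g$, $gf$ are isomorphisms then so is the third (if $f$ and $gf$ are isos, then $g=(gf)f^{-1}$ is an iso; if $g$ and $gf$ are isos, then $f=g^{-1}(gf)$; the case $f,g$ isos is trivial). Applying this in $\CB$ to $H(\alpha)$, $H(\beta)$, $H(\beta\alpha)$ yields the claim.

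For part c), I would argue only the statement about $\CC^H$ and limits, the colimit statement for $\CC_H$ being dual (replace $\CA$ by $\CA\opp$, which interchanges $H$-local and $H$-colocal and interchanges limits with colimits). Let $\{X_i\}$ be a diagram of $H$-local objects with limit $X=\varprojlim X_i$, and let $\epsilon\colon U\to V$ be an $H$-equivalence. The functor $\CA(-,X)$ sends $\epsilon$ to $\CA(\epsilon,X)\colon\CA(V,X)\to\CA(U,X)$; using the universal property $\CA(-,\varprojlim X_i)\cong\varprojlim\CA(-,X_i)$ naturally in the first variable, the map $\CA(\epsilon,X)$ is identified with $\varprojlim\CA(\epsilon,X_i)$, a limit of bijections (each $X_i$ being $H$-local), hence a bijection; thus $X$ is $H$-local. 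For retracts: if $X'$ is a retract of an $H$-local $X$ via $\alpha\colon X'\to X$, $\beta\colon X\to X'$ with $\beta\alpha=1_{X'}$, then for an $H$-equivalence $\epsilon$ the map $\CA(\epsilon,X')$ is a retract of the bijection $\CA(\epsilon,X)$ in $\Set\opp$ (functoriality of $\CA(\epsilon,-)$), and a retract of a bijection is a bijection.

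\textbf{Main obstacle.} There is no real obstacle here — all three items are formal consequences of functoriality of $H$ and of the hom-bifunctor together with the interaction of hom with (co)limits; the only point requiring a moment's care is setting up the retract argument in the arrow category $\CB^\to$ correctly in part a), and remembering to invoke the self-duality ($\CA\leftrightarrow\CA\opp$) so that the colocal/colimit half of c) is not reproved from scratch. This is presumably why the authors "record without proof" these facts.
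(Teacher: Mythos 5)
Your proof is correct. The paper records this lemma explicitly without proof, and your argument is exactly the standard formal verification intended: functoriality of $H$ plus the fact that retracts of isomorphisms are isomorphisms gives a) and b), and the compatibility of $\CA(-,-)$ with (co)limits and retracts, together with the $\CA\leftrightarrow\CA\opp$ duality, gives c).
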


Moreover if every object of $\CA$ has a left (right) approximation
with an $H$-local (colocal) object, in a sense becoming precise in
the hypothesis of the Lemma bellow, then we are in the situation
of a localization (colocalization) functor, as it may be seen
from:

\begin{lemma}\label{fractions}
If for every $X\in\CA$ there is an $H$-equivalence $X\to X^H$ with
$X^H\in\CC^H$ (respectively, $X_H\to X$ with $X_H\in\CC_H$), then
the assignment $X\mapsto X^H$ ($X\mapsto X_H$) is functorial and
defines a left (right) adjoint of the inclusion functor
$\CC^H\to\CA$ ($\CC_H\to\CA$). Moreover the left (right) adjoint
of the inclusion functor sends every map $\alpha\in\Eq(H)$ into an
isomorphism and it is universal relative to this property.
\end{lemma}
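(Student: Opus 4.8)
The plan is to prove the statement about the left adjoint $X\mapsto X^H$ only; the colocalization half then follows formally by passing to the opposite category, since the $H$-colocal objects of $\CA$ are precisely the $H\opp$-local objects of $\CA\opp$, and $\Eq(H\opp)=\Eq(H)\opp$. So fix, once and for all, an $H$-equivalence $\eta_X\colon X\to X^H$ with $X^H\in\CC^H$ for each $X\in\CA$, and first establish functoriality. Given $f\colon X\to Y$, the composite $\eta_Y f$ is a morphism into the $H$-local object $Y^H$ while $\eta_X$ is an $H$-equivalence, so by the very definition of $H$-locality the map $\CA(\eta_X,Y^H)\colon\CA(X^H,Y^H)\to\CA(X,Y^H)$ is a bijection; let $f^H\colon X^H\to Y^H$ be the unique morphism with $f^H\eta_X=\eta_Y f$. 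Uniqueness of such factorizations forces $(gf)^H=g^Hf^H$ and $(1_X)^H=1_{X^H}$ immediately, so $F:=(-)^H\colon\CA\to\CC^H$ is a functor and $\eta=(\eta_X)_X$ is a natural transformation $1_\CA\Rightarrow iF$, where $i\colon\CC^H\to\CA$ is the inclusion.

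Next I would check the adjunction $F\dashv i$. For $X\in\CA$ and $L\in\CC^H$ the map $\CA(\eta_X,L)\colon\CA(X^H,L)\to\CA(X,L)$ is a bijection (this is just $H$-locality of $L$ tested against the $H$-equivalence $\eta_X$), it is natural in $L$, and it is natural in $X$ by the relation $f^H\eta_X=\eta_Y f$; since $i$ is full, $\CA(X^H,L)=\CC^H(FX,L)$, so this is the desired natural isomorphism $\CC^H(FX,L)\cong\CA(X,iL)$ with unit $\eta$. At this point I would record the key auxiliary observation that any $H$-equivalence $\beta\colon L\to L'$ between $H$-local objects is an isomorphism: bijectivity of $\CA(\beta,L)$ produces a $g$ with $g\beta=1_L$, and then bijectivity of $\CA(\beta,L')$ forces $\beta g=1_{L'}$. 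In particular $\eta_X$ is an isomorphism whenever $X\in\CC^H$, so the counit $Fi\Rightarrow 1_{\CC^H}$ is invertible.

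For the last assertion, suppose $\alpha\colon X\to Y$ lies in $\Eq(H)$. Then $F(\alpha)=\alpha^H$ satisfies $\alpha^H\eta_X=\eta_Y\alpha$, which is a composite of $H$-equivalences, hence an $H$-equivalence by Lemma \ref{2out3}b); applying the `two out of three' property once more gives $\alpha^H\in\Eq(H)$, and since $\alpha^H$ is an $H$-equivalence between the $H$-local objects $X^H$ and $Y^H$ it is an isomorphism by the auxiliary fact. For universality, let $G\colon\CA\to\DC$ be any functor sending every morphism in $\Eq(H)$ to an isomorphism. Then $G\eta\colon G\Rightarrow(Gi)F$ is invertible, since each $G(\eta_X)$ is, so $\bar G:=Gi$ is a factorization of $G$ through $F$; and if $G'F\cong G$ for some $G'\colon\CC^H\to\DC$, then $G'\cong G'Fi\cong Gi$ — the first isomorphism because $Fi\cong 1_{\CC^H}$ and the second because $G'F\cong G$ — so the factorization is unique up to natural isomorphism.

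I do not expect a genuinely hard step: the entire argument is squeezed out of the single bijectivity condition defining $H$-locality together with the elementary Lemma \ref{2out3}. The one place to stay alert is that everything is done over completely arbitrary categories, so one must resist invoking a calculus of fractions, any additive structure, or any concrete description of $X^H$ beyond the postulated arrow $\eta_X$; concretely, the only moving parts are the naturality of $\CA(\eta_X,-)$ and the retraction-then-section trick that upgrades an $H$-equivalence between $H$-local objects to an isomorphism, and these are exactly what carry the whole proof.
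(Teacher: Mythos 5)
Your proof is correct and is precisely the standard argument that the paper leaves implicit (its own proof consists of the single word ``Straightforward''): functoriality and the adjunction both fall out of the bijection $\CA(\eta_X,L)\colon\CA(X^H,L)\to\CA(X,L)$, and the retraction-then-section trick shows an $H$-equivalence between $H$-local objects is invertible, which yields both the inversion of $\Eq(H)$ and the universal property. The dualization to the colocal half via $\CA\opp$ is also exactly as intended.
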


\begin{proof} Straightforward. (The first statement was also noticed in \cite[1.6]{DL}).
\end{proof}

In the sequel we consider a pair of adjoint functors $H:\CA\to\CB$
at the right and $T:\CB\to\CA$ at the left, where $\CA$ and $\CB$
are arbitrary categories. We shall symbolize this situation by
$T\dashv H$. Consider also the arrows of adjunction
\[\delta:T\circ H\to\id_\CA\hbox{ and }\eta:\id_\CB\to H\circ T.\]
Note that, for all $X\in\CA$ and all $Y\in\CB$ we obtain the
commutative diagrams in $\CB$ and $\CA$ respectively:
\begin{equation}\label{diaadj}\diagram H(X)\rto^{\eta_{H(X)}\hskip20pt}\drdouble_{1_{H(X)}}&(H\circ T\circ H)(X)\dto^{H(\delta_X)}\\
&H(X)\enddiagram\hbox{ and }
\diagram T(Y)\rto^{T(\eta_Y)\hskip20pt}\drdouble_{1_{T(Y)}}&(T\circ H\circ T)(Y)\dto^{\delta_{T(Y)}}\\
&T(Y)\enddiagram\end{equation} showing that $H(X)$ and $T(Y)$ are
retracts of $(H\circ T\circ H)(X)$, respectively $(T\circ H\circ
T)(Y)$. Corresponding to the adjoint pair considered above, we
define the following full subcategories of $\CA$ and $\CB$:
\[\CS_H=\{X\in\CA\mid\delta_X:(T\circ H)(X)\to X\hbox{ is an
isomorphism}\},\]  and respectively
\[\CS^T=\{Y\in\CB\mid\eta_Y:Y\to(H\circ T)(Y)\hbox{ is an
isomorphism}\}.\] The objects in $\CS_H$ and $\CS^T$ are called
{\em $\delta$-reflexive}, respectively {\em $\eta$-reflexive}.
Note that $H$ and $T$ restrict to mutually inverse equivalences of
categories between $\CS_H$ and $\CS^T$ and these subcategories are
the largest of $\CA$ and $\CB$ respectively, enjoying this
property.

\section{An equivalence induced by adjoint functors}\label{eiaf}

In this section we fix a pair of adjoint functors $T\dashv H$
between two arbitrary categories  $\CA$ and $\CB$, as in Section
\ref{not}.

\begin{lemma}\label{inclusions} The following inclusions hold:
\begin{itemize}
\item[{\rm a)}] $\CS_H\subseteq\Ima T\subseteq\CC_H\subseteq\CA$.
\item[{\rm b)}] $\CS^T\subseteq\Ima H\subseteq\CC^T\subseteq\CB$.
\end{itemize}
\end{lemma}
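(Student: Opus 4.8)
The plan is to prove part a); part b) follows by the dual argument (replacing the pair $T\dashv H$ acting on $\CA$, $\CB$ with the opposite pair $H\opp\dashv T\opp$ on $\CB\opp$, $\CA\opp$, under which $H$-colocal objects correspond to $T$-local objects and $\CS_H$ to $\CS^T$). So I focus on the three inclusions $\CS_H\subseteq\Ima T\subseteq\CC_H\subseteq\CA$. The last one is trivial. The first is immediate: if $X\in\CS_H$ then $\delta_X:(T\circ H)(X)\to X$ is an isomorphism, so $X\cong T(H(X))$ lies in $\Ima T$.

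The real content is the middle inclusion $\Ima T\subseteq\CC_H$: every object of the form $T(Y)$ is $H$-colocal. First I would reduce to showing $T(Y)$ itself is $H$-colocal, since $\CC_H$ is closed under isomorphisms (all our subcategories are) and, by Lemma \ref{2out3}c), under retracts — though for this inclusion closure under isomorphism suffices. Fix an $H$-equivalence $\epsilon:B\to B'$ in $\CA$, i.e.\ $H(\epsilon)$ is an isomorphism in $\CB$; I must show $\epsilon^*=\CA(T(Y),\epsilon):\CA(T(Y),B)\to\CA(T(Y),B')$ is a bijection. The key tool is the adjunction isomorphism $\CA(T(Y),-)\cong\CB(Y,H(-))$, which is natural in the second variable. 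This gives a commutative square

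\begin{equation}\label{adjsq}
\diagram
\CA(T(Y),B)\rto^{\epsilon_*}\dto_{\cong}&\CA(T(Y),B')\dto^{\cong}\\
\CB(Y,H(B))\rto^{\CB(Y,H(\epsilon))}&\CB(Y,H(B'))
\enddiagram
\end{equation}

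whose vertical maps are the adjunction bijections. Since $H(\epsilon)$ is an isomorphism in $\CB$, the bottom map $\CB(Y,H(\epsilon))$ is a bijection, hence so is the top map $\epsilon_*$. As $\epsilon$ was an arbitrary $H$-equivalence, $T(Y)\in\CC_H$, and therefore $\Ima T\subseteq\CC_H$.

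I do not foresee a genuine obstacle here; the only point requiring a little care is the bookkeeping in the dual passage for part b) — making sure that under $(-)\opp$ the adjunction $T\dashv H$ becomes $H\opp\dashv T\opp$ with the units and counits swapped, so that $H$-colocal becomes $H\opp$-local (equivalently, the roles of $\CC_H$ and $\CC^T$ are exchanged exactly as the roles of $\CS_H$ and $\CS^T$ are), and that $\Ima T$ and $\Ima H$ are interchanged. Once that dictionary is in place, b) is literally a) read in the opposite categories. Alternatively, one can avoid invoking duality and simply rerun the argument above with the adjunction isomorphism $\CB(H(X),Y)\cong\CA(X,T(Y))$... wait, that is the wrong variance; the clean route for b) really is to mimic a) using $\CB(-,H(X))\cong\CA(T(-),X)$ naturally in the first variable, so that an $H$-equivalence $\epsilon$ (meaning $H(\epsilon)$... ) — in any case the symmetric statement goes through verbatim, and I would just write "the proof of b) is dual."
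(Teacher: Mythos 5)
Your proof is correct and follows essentially the same route as the paper: the first inclusion is immediate from the definition of $\CS_H$, the middle one comes from the adjunction isomorphism $\CA(T(Y),\epsilon)\cong\CB(Y,H(\epsilon))$ being a bijection whenever $H(\epsilon)$ is an isomorphism, and part b) is obtained by duality. The only blemishes are cosmetic: the top arrow of your square should be labelled $\epsilon^*$ (the paper reserves $\epsilon_*$ for the contravariant case), and the hesitation about variance at the end resolves exactly as you suspect.
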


\begin{proof}
a) The first inclusion is obvious. For the second inclusion
observe that for all $\epsilon\in\Eq(H)$ and all $Y\in\CB$ the
isomorphism in $\Set^\to$
\[\epsilon^*=\CA(T(Y),\epsilon)\cong\CB(Y,H(\epsilon))\] shows
that $\epsilon^*$ is bijective. The inclusions from b) follow by
duality.
\end{proof}

\begin{lemma}\label{smallest}
Let $\CC$ be a subcategory of $\CA$ such that the inclusion
functor $I:\CC\to\CA$ has a right adjoint $R:\CA\to\CC$ and the
arrow of the adjunction $\mu_X:(I\circ R)(X)\to X$ is an
$H$-equivalence for all $X\in\CA$.  Then $\mu_X$ is an isomorphism
for all $X\in\CC_H$, and consequently $\CC_H\subseteq\CC$.
\end{lemma}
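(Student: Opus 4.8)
The plan is to use the hypothesis directly: for an arbitrary $X\in\CC_H$ we must show $\mu_X$ is an isomorphism, and then invoke the fact that subcategories are closed under isomorphisms to conclude $X\in\CC$. Since $X$ is $H$-colocal, by definition the map $\epsilon^*=\CA(X,\epsilon)$ is bijective for every $H$-equivalence $\epsilon$. The idea is to test this against the very $H$-equivalence supplied by the hypothesis, namely $\mu_X:(I\circ R)(X)\to X$.

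First I would apply the $H$-colocality of $X$ to the $H$-equivalence $\mu_X$ itself: the induced map $(\mu_X)^*=\CA(X,\mu_X):\CA(X,(I\circ R)(X))\to\CA(X,X)$ is a bijection. Hence there is a (unique) morphism $\nu:X\to (I\circ R)(X)$ with $\mu_X\circ\nu=1_X$; this gives one half of the inverse. For the other half, consider the two morphisms $\nu\circ\mu_X$ and $1_{(I\circ R)(X)}$ from $(I\circ R)(X)$ to $(I\circ R)(X)$. I would like to show they are equal; the natural move is to postcompose with $\mu_X$ and use an injectivity statement. We have $\mu_X\circ(\nu\circ\mu_X)=(\mu_X\circ\nu)\circ\mu_X=\mu_X=\mu_X\circ 1_{(I\circ R)(X)}$, so it suffices to know that $\CA(W,\mu_X)$ is injective for $W=(I\circ R)(X)$.

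This is where the main subtlety lies, and it is handled by a second application of the hypothesis. The object $(I\circ R)(X)$ lies in $\CC$, so by Lemma \ref{2out3} c) (dual statement: $\CC_H$ closed under colimits and retracts) — more simply, by Lemma \ref{inclusions} applied with roles suitably arranged, or directly: since $R$ is right adjoint to the inclusion, $(I\circ R)(X)$ is an object of the image of $I$, and one checks it is $H$-colocal. Indeed, for any $H$-equivalence $\epsilon$, the counit $\mu$ being an $H$-equivalence for all objects and the two-out-of-three property of $\Eq(H)$ (Lemma \ref{2out3} b)) let us compare $\CA((I\circ R)(X),\epsilon)$ with $\CA(X,\epsilon)$ via naturality of $\mu$; since $X$ need not be arbitrary here, the cleaner route is: $(I\circ R)(X)$ is a retract of nothing a priori, so instead I would simply observe that $R$ being a right adjoint means $\CC$ is coreflective, and that any object in the image of a coreflection with $H$-equivalence counit is $H$-colocal — this is the colocal analogue of the situation and follows because for $\epsilon\in\Eq(H)$, the square relating $\CA((I\circ R)(X),\epsilon)$ to the bijection on $\CA(\,\cdot\,,\epsilon)$ at an $H$-colocal object forces bijectivity. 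Granting that $(I\circ R)(X)$ is $H$-colocal, its colocality applied to the $H$-equivalence $\mu_X$ gives that $\CA((I\circ R)(X),\mu_X)$ is bijective, in particular injective, which yields $\nu\circ\mu_X=1_{(I\circ R)(X)}$. Thus $\mu_X$ is an isomorphism with inverse $\nu$, so $X\cong (I\circ R)(X)\in\CC$, and since $\CC$ is closed under isomorphism, $X\in\CC$. The one genuine obstacle to watch is establishing that $(I\circ R)(X)$ is $H$-colocal; if that turns out delicate, the alternative is to note $(I\circ R)(X)\in\Ima I$ and that objects of $\Ima T$-type images inherit colocality, or to argue that $\CC\subseteq\CC_H$ is not assumed but that the needed injectivity can instead be extracted from a third application of $X$'s colocality after identifying the relevant hom-sets through the adjunction $R\dashv$ (wait — $I\dashv R$ here, so $\CA(I W, X)\cong\CC(W,RX)$), which converts the injectivity question into one about morphisms in $\CC$ and the counit.
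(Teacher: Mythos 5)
Your first half coincides with the paper's: colocality of $X$ applied to the $H$-equivalence $\mu_X$ yields $\nu\colon X\to(I\circ R)(X)$ with $\mu_X\nu=1_X$, and the remaining task is to cancel $\mu_X$ on the left in $\mu_X(\nu\mu_X)=\mu_X\cdot 1$. The gap is in how you justify that cancellation. Your primary route is the claim that $W=(I\circ R)(X)$ is $H$-colocal, but none of the arguments you sketch establishes this, and I do not see how it could be established at this point without circularity: the natural comparison between $\CA(W,\epsilon)$ and $\CA(X,\epsilon)$ for an $H$-equivalence $\epsilon\colon U\to V$ goes through the precomposition maps $\CA(\mu_X,U)\colon\CA(X,U)\to\CA(W,U)$, and these are bijective only when $U$ is $H$-local, which is not given; closure of $\CC_H$ under retracts also points the wrong way, since it is $X$ that is exhibited as a retract of $W$, not conversely. (A posteriori $W\cong X$ is indeed colocal, but that is the conclusion of the lemma.)

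The correct repair is exactly the one you relegate to a fallback in your final sentence: you do not need $W$ to be $H$-colocal, only that $\CA(W,\mu_X)$ be injective, and this holds for \emph{every} object $W$ of $\CC$ by the adjunction $I\dashv R$ together with the fullness of $\CC$ --- every morphism from an object of $\CC$ to $X$ factors uniquely through the counit $\mu_X$; equivalently, $R(\mu_X)$ is an isomorphism because the unit of $I\dashv R$ is invertible and the triangle identity holds. With that in place your cancellation goes through and the proof closes. This is in substance the paper's own second step, which it performs by writing the naturality square of $\mu$ at $\mu_X'$ and using $R\circ I\cong\id_\CC$ to see that $\mu_{(I\circ R)(X)}\circ(I\circ R)(\mu_X')$ is the identity. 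So the architecture of your argument is right, but as written the load-bearing step rests on the unproved (and at that stage unprovable) colocality of $(I\circ R)(X)$; replace it by the adjunction argument you mention at the end.
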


\begin{proof} Let $X\in\CC_H$. Since $\mu_X\in\Eq(H)$, we deduce
that the induced map
\[\mu_X^*:\CA(X,(I\circ R)(X))\to\CA(X,X)\] is bijective,
consequently there is a morphism $\mu_X':X\to(I\circ R)(X)$ such
that $\mu_X\mu_X'=1_X$. Since $R\circ I\cong\id_\CC$ naturally,
and $\mu$ is also natural, we obtain a commutative diagram
\[\diagram
(I\circ R)(X)\rrto^{(I\circ R)(\mu_X')\hskip15pt}\dto_{\mu_X}\drrdouble&&(I\circ R\circ I\circ R)(X)\dto^{\mu_{(I\circ R)(X)}}\\
X\rrto_{\mu_X'}&&(I\circ R)(X)
\enddiagram\]
showing that $\mu_X'\mu_X=1_{(I\circ R)(X)}$, hence $\mu_X$ is an
isomorphism.
\end{proof}

\begin{lemma}\label{sisimh} If $\CS^T=\Ima H$ then $\CS_H=\CC_H$.
Dually if $\CS_H=\Ima T$ then $\CS^T=\CC^T$.
\end{lemma}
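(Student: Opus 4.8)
The plan is to prove the first implication ($\CS^T = \Ima H$ implies $\CS_H = \CC_H$) and then obtain the dual by the standard formal duality (reversing all arrows and interchanging $T$ with $H$). By Lemma \ref{inclusions}a) we already have $\CS_H \subseteq \CC_H$, so the whole content is the reverse inclusion $\CC_H \subseteq \CS_H$.

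\medskip

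The idea is to feed the adjunction $T \dashv H$ into Lemma \ref{smallest}. Take $\CC = \Ima T$. I need to check the two hypotheses of that lemma: that the inclusion $I : \Ima T \to \CA$ has a right adjoint $R$, and that the counit $\mu_X : (I \circ R)(X) \to X$ is an $H$-equivalence for every $X$. For the first, I would use the hypothesis $\CS^T = \Ima H$: since $H$ and $T$ restrict to mutually inverse equivalences between $\CS_H$ and $\CS^T$ (noted at the end of Section \ref{not}), and since under our assumption $\CS^T = \Ima H$, it should follow that $\CS_H = \Ima T$ as well — indeed, $\Ima T = T(\CB) = T(\Ima H)$ (as $T(Y) = T(\eta\text{-image stuff})$; more carefully, every object of $\Ima T$ is $T(Y)$ for some $Y$, and $T(Y) \cong T(H(T(Y)))$ with $H(T(Y)) \in \Ima H = \CS^T$, so $T(Y)$ is the image under the equivalence $T|_{\CS^T}$ of an object of $\CS^T$, hence lies in $\CS_H$). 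So $\Ima T = \CS_H$. Now define $R = T \circ H : \CA \to \CS_H = \Ima T$; this lands in $\CS_H$ because $\delta_{T(H(X))}$ is an isomorphism (it splits by diagram \eqref{diaadj} and $\eta_{H(X)}$ is split mono, but in fact $T(H(X)) \in \Ima T \subseteq \CC_H$ and applying the argument again... — cleaner: $H(T(H(X))) \cong H(X) \in \Ima H = \CS^T$, so $T(H(X)) \in \CS_H$). The functor $R$ is right adjoint to the inclusion $I : \CS_H \to \CA$: for $C \in \CS_H$ and $X \in \CA$, $\CA(C, X) \cong \CA(T H(C)... )$ — one uses that $C \cong T(H(C))$ and the adjunction $\CA(T(H(C)), X) \cong \CB(H(C), H(X)) \cong \CA(T(H(C)), T(H(X)))$ by fullness/faithfulness of the equivalence, which is $\CA(C, R(X))$. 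The counit of this adjunction is exactly $\delta_X : T(H(X)) \to X$, which is an $H$-equivalence by the first diagram in \eqref{diaadj} (its image $H(\delta_X)$ is split epi with splitting $\eta_{H(X)}$, and by the triangle identity it is in fact inverse to $\eta_{H(X)}$, hence an isomorphism).

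\medskip

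With both hypotheses of Lemma \ref{smallest} verified for $\CC = \Ima T$, the lemma gives $\CC_H \subseteq \Ima T = \CS_H$, completing the proof of the first assertion. The dual statement then follows by applying the result already proved to the adjoint pair $H\opp \dashv T\opp$ between $\CB\opp$ and $\CA\opp$: under this dualization, $\CS_H$ becomes $\CS^T$, $\CC_H$ becomes $\CC^T$, $\Ima T$ becomes $\Ima H$, and the hypothesis $\CS_H = \Ima T$ becomes the hypothesis of the dual version, yielding $\CS^T = \CC^T$.

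\medskip

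The main obstacle I anticipate is bookkeeping rather than conceptual: precisely identifying $R$ and verifying that $I \dashv R$ with counit $\delta$, since one must chase the triangle identities of the original adjunction together with the equivalence $\CS_H \simeq \CS^T$ to see that $T\circ H$ actually corestricts to $\Ima T$ and that the resulting adjunction counit is $\delta$. Once that identification is pinned down, invoking Lemma \ref{smallest} is immediate, and the duality step is purely formal.
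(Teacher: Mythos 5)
Your overall strategy is the paper's: use the hypothesis $\CS^T=\Ima H$ together with the triangle identity $H(\delta_X)\circ\eta_{H(X)}=1_{H(X)}$ to see that $\delta_X$ is an $H$-equivalence, and then feed this into Lemma \ref{smallest}; the duality step at the end is also fine. The gap is the intermediate claim $\Ima T=\CS_H$, and specifically the assertion $T(Y)\cong T(H(T(Y)))$. The diagrams (\ref{diaadj}) only give that $T(Y)$ is a \emph{retract} of $(T\circ H\circ T)(Y)$: the identity $\delta_{T(Y)}\circ T(\eta_Y)=1_{T(Y)}$ makes $\delta_{T(Y)}$ a split epimorphism, but nothing in your text shows that $T(\eta_Y)\circ\delta_{T(Y)}$ is the identity, and for a general adjunction $T(Y)$ and $(T\circ H\circ T)(Y)$ are not isomorphic. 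Knowing $H(T(Y))\in\CS^T$ only yields that $(T\circ H\circ T)(Y)\in\CS_H$ and that $\delta_{T(Y)}$ is an $H$-equivalence; upgrading $\delta_{T(Y)}$ to an isomorphism is essentially the content of what you are proving (it is condition (iv) of Theorem \ref{sisc}), so the argument is circular at this point. The claim can be rescued -- for instance by observing that an $H$-equivalence between two $H$-colocal objects is an isomorphism, using colocality of the target to produce a section and colocality of the source to show the section is two-sided -- but no such argument appears in your proof.

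The good news is that the detour through $\Ima T=\CS_H$ is unnecessary. Since $(T\circ H)(X)\in\Ima T\subseteq\CC_H$ by Lemma \ref{inclusions} and $\delta_X\in\Eq(H)$, Lemma \ref{fractions} already gives that $X\mapsto (T\circ H)(X)$ is a right adjoint of the inclusion $\CC_H\to\CA$ with counit $\delta$; no hom-set computation relying on $C\cong (T\circ H)(C)$ is needed. Then the \emph{first} conclusion of Lemma \ref{smallest}, applied with $\CC=\CC_H$ and $\mu=\delta$, says directly that $\delta_X$ is an isomorphism for every $X\in\CC_H$, i.e. $\CC_H\subseteq\CS_H$; together with the trivial inclusion $\CS_H\subseteq\CC_H$ this finishes the proof. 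That is exactly the paper's route.
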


\begin{proof} Consider an arbitrary object $X\in\CA$. By
hypothesis $H(X)\in\CS^T$, so $\eta_{H(X)}$ is an isomorphism.
Together with diagrams (\ref{diaadj}), this implies $H(\delta_X)$
is an isomorphism. Thus $\delta_X:(T\circ H)(X)\to X$ is an
$H$-equivalence, and we know $(T\circ H)(X)\in\Ima
T\subseteq\CC_H$. As we learned from Lemma \ref{fractions}, this
means that the assignment $X\mapsto(T\circ H)(X)$ defines a right
adjoint of the inclusion of $\CC_H$ in $\CA$. If in addition
$X\in\CC_H$, then $\delta_X$ is an isomorphism by Lemma
\ref{smallest}, proving the inclusion $\CC_H\subseteq\CS_H$. Since
the converse inclusion is always true, the conclusion holds.
\end{proof}

\begin{remark}\label{rladj}
From the proof of Lemma \ref{sisimh} we can see that the condition
$\CS^T=\Ima H$ implies that the functor $\CA\to\CC_H$,
$X\mapsto(T\circ H)(X)$ is the right adjoint of the inclusion
functor of $\CS_H=\CC_H$ into $\CA$. Dually if $\CS_H=\Ima T$,
then the functor $\CB\to\CC^T$, $Y\mapsto(H\circ T)(Y)$ is the
left adjoint of the inclusion functor of $\CS^T=\CC^T$ into $\CB$.
\end{remark}

\begin{theorem}\label{sisc} The following are equivalent:
\begin{itemize}
\item[{\rm (i)}] $\CS^T=\CC^T$. \item[{\rm (ii)}] $\CS^T=\Ima H$.
\item[{\rm (iii)}] $\CS_H=\CC_H$. \item[{\rm (iv)}] $\CS_H=\Ima
T$. \item[{\rm (v)}] The functors $H$ and $T$ induce mutually
inverse equivalence of categories between $\CC_H$ and $\CC^T$.
\end{itemize}
\end{theorem}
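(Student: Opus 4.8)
The strategy is to prove the cycle of implications (i) $\Rightarrow$ (ii) $\Rightarrow$ (iii) $\Rightarrow$ (iv) $\Rightarrow$ (v) $\Rightarrow$ (i), using the already-established lemmas of this section as the main engine. Two of the implications, namely (ii) $\Rightarrow$ (iii) and (iv) $\Rightarrow$ (v)-type statements, are essentially Lemma~\ref{sisimh} together with Remark~\ref{rladj}, so the real work lies in opening up the chain at the ``cheap'' ends.

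First I would treat (i) $\Rightarrow$ (ii). Assume $\CS^T=\CC^T$. By Lemma~\ref{inclusions}b) we always have $\CS^T\subseteq\Ima H\subseteq\CC^T$, so the hypothesis forces all three to coincide, in particular $\CS^T=\Ima H$; this is immediate. Next, (ii) $\Rightarrow$ (iii) is precisely the first assertion of Lemma~\ref{sisimh}. For (iii) $\Rightarrow$ (iv): if $\CS_H=\CC_H$, then by Lemma~\ref{inclusions}a) the always-valid inclusions $\CS_H\subseteq\Ima T\subseteq\CC_H$ collapse, giving $\CS_H=\Ima T$. The implication (iv) $\Rightarrow$ (iii)-and-(v) is then handled dually: (iv) is the hypothesis ``$\CS_H=\Ima T$'' of the dual half of Lemma~\ref{sisimh}, which yields $\CS^T=\CC^T$, and by the part already proved this loops back; but to keep the cycle clean I would instead argue (iv) $\Rightarrow$ (v) directly, noting that (iv) combined with the dual of Lemma~\ref{sisimh} gives $\CS^T=\CC^T$ as well, and then recall from Section~\ref{not} that $H$ and $T$ restrict to mutually inverse equivalences between $\CS_H$ and $\CS^T$; substituting the equalities $\CS_H=\CC_H$ (which follows from (iv) via Lemma~\ref{inclusions}a and the dual reasoning, or simply from (iii) once we close the loop) and $\CS^T=\CC^T$ gives exactly statement (v).

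Finally (v) $\Rightarrow$ (i): suppose $H$ and $T$ restrict to mutually inverse equivalences $\CC_H\rightleftarrows\CC^T$. Since $H\colon\CC_H\to\CC^T$ is then essentially surjective, $\CC^T\subseteq\Ima H$; combined with $\Ima H\subseteq\CC^T$ from Lemma~\ref{inclusions}b) we get $\CC^T=\Ima H$. Now for $Y\in\CC^T$ write $Y\cong H(X)$ with $X\in\CC_H$; because the equivalence is inverse to $T$ we have $(H\circ T)(Y)\cong(H\circ T\circ H)(X)\cong H(X)\cong Y$, and one must check this isomorphism is realized by $\eta_Y$ — this uses the triangle identities of~(\ref{diaadj}) together with the fact that on $\CC_H$ the counit $\delta$ is (up to the equivalence) invertible, so $H(\delta_X)$ is invertible and hence $\eta_{H(X)}$ is invertible by the first triangle in~(\ref{diaadj}). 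Thus $Y\in\CS^T$, giving $\CC^T\subseteq\CS^T$ and, with the reverse inclusion always available, equality.

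The main obstacle I anticipate is the bookkeeping in (v) $\Rightarrow$ (i): ``mutually inverse equivalences'' as an abstract statement must be promoted to a statement about the specific adjunction units $\eta$ and $\delta$, and one has to be careful that the equivalence furnished by (v) is genuinely the restriction of the given $H,T$ (which the theorem does assert) rather than some other pair of quasi-inverse functors. Once that is pinned down, the triangle identities in~(\ref{diaadj}) do all the remaining work, exactly as in the proof of Lemma~\ref{sisimh}. Everything else is a formal consequence of Lemmas~\ref{inclusions} and~\ref{sisimh} and Remark~\ref{rladj}, so I would present the argument as the short implication cycle above, citing those results rather than reproving the adjunction manipulations.
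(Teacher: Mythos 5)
Your proof is correct and follows essentially the same route as the paper: the equivalence of (i)--(iv) is the cycle through Lemmas \ref{inclusions} and \ref{sisimh}, and the equivalence with (v) rests on the fact that $\CS_H$ and $\CS^T$ are the largest subcategories on which $H$ and $T$ restrict to mutually inverse equivalences (the paper simply invokes the remark to this effect from Section \ref{not}, while you unwind it via the triangle identities). The one phrase worth tightening is ``$\delta$ is (up to the equivalence) invertible on $\CC_H$'': the clean justification is that the restricted pair is still an adjunction whose unit and counit are the restrictions of $\eta$ and $\delta$, and since mutually inverse equivalences are in particular fully faithful, both the restricted unit and counit are automatically isomorphisms, which yields $\CC_H\subseteq\CS_H$ and $\CC^T\subseteq\CS^T$ directly.
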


\begin{proof} The equivalence of the conditions (i)--(iv) follows by Lemmas
\ref{inclusions} and \ref{sisimh}. Finally the equivalent
conditions (i)--(ii) are also equivalent to (v), because $\CS_H$
and $\CS^T$ are the largest subcategories of $\CA$ and $\CB$ for
which $H$ and $T$ restrict to mutually inverse equivalences.
\end{proof}

\begin{corollary}\label{tisff}
The adjoint functors $T\dashv H$ induce mutually inverse
equivalences $\CC_H\rightleftarrows\CB$ if and only if $T$ is
fully faithful. Dually the adjoint pair induces equivalences
$\CA\rightleftarrows\CC^T$ if and only if $H$ is fully faithful.
\end{corollary}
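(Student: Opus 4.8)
The plan is to reduce both assertions to Theorem~\ref{sisc} by way of the classical criterion for an adjunction $T\dashv H$: the left adjoint $T$ is fully faithful if and only if the unit $\eta\colon\id_\CB\to H\circ T$ is a natural isomorphism, which in the notation of Section~\ref{not} says precisely $\CS^T=\CB$; dually, the right adjoint $H$ is fully faithful if and only if the counit $\delta\colon T\circ H\to\id_\CA$ is a natural isomorphism, i.e.\ $\CS_H=\CA$.

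For the first equivalence, suppose first that $T$ is fully faithful, so $\CS^T=\CB$. Since Lemma~\ref{inclusions}b) always gives $\CS^T\subseteq\Ima H\subseteq\CC^T\subseteq\CB$, the equality $\CS^T=\CB$ forces all four of these subcategories to coincide with $\CB$. In particular $\CS^T=\CC^T$, so condition~(i) of Theorem~\ref{sisc} holds, and the theorem delivers condition~(v): $H$ and $T$ induce mutually inverse equivalences between $\CC_H$ and $\CC^T$, the latter being $\CB$. Conversely, if $H$ and $T$ induce mutually inverse equivalences $\CC_H\rightleftarrows\CB$, then, by Lemma~\ref{inclusions}a), $T$ may be viewed as a corestriction $\CB\to\CC_H$, and this corestriction is an equivalence of categories, hence in particular fully faithful; composing it with the fully faithful inclusion $\CC_H\hookrightarrow\CA$ shows that $T\colon\CB\to\CA$ is fully faithful.

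The second equivalence is proved symmetrically: $H$ fully faithful means $\CS_H=\CA$; by the chain $\CS_H\subseteq\Ima T\subseteq\CC_H\subseteq\CA$ of Lemma~\ref{inclusions}a), this collapses everything to $\CA$, so conditions~(iii) and~(iv) of Theorem~\ref{sisc} hold and condition~(v) gives mutually inverse equivalences between $\CC_H=\CA$ and $\CC^T$. Conversely, an equivalence $\CA\rightleftarrows\CC^T$ induced by $H$ and $T$ exhibits $H$ as the corestriction $\CA\to\CC^T$ (legitimate by Lemma~\ref{inclusions}b)) followed by the fully faithful inclusion $\CC^T\hookrightarrow\CB$, hence $H$ is fully faithful.

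I do not anticipate a real obstacle here. The only points needing attention are that the hypothesis $\CS^T=\CB$ must be matched with condition~(i) of Theorem~\ref{sisc} rather than~(ii)---which is why Lemma~\ref{inclusions} is used to collapse the whole chain down to $\CB$---and that the equivalence in the statement is with $\CB$ itself, so one must record $\CC^T=\CB$ (respectively $\CC_H=\CA$ in the dual) in order to identify the target of the equivalence in Theorem~\ref{sisc}(v). The appeal to the standard "fully faithful if and only if unit/counit invertible'' criterion is the single ingredient not spelled out in the excerpt, but it is entirely routine.
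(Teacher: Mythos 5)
Your proposal is correct and follows essentially the same route as the paper: identify "$T$ fully faithful" with "$\eta$ invertible", i.e.\ $\CS^T=\CB$, and then invoke Theorem~\ref{sisc} (using Lemma~\ref{inclusions} to see that $\CS^T=\CB$ forces $\CC^T=\CB$). The only cosmetic difference is in the converse direction, where the paper implicitly appeals to the maximality of $\CS^T$ among subcategories on which $H$ and $T$ restrict to equivalences, while you argue directly via the corestriction of $T$ to $\CC_H$ followed by the full inclusion; both are fine.
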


\begin{proof} The functor $T$ is fully faithful exactly if the unit of the
adjunction $\eta:\id_\CB\to(H\circ T)$ is an isomorphism, or
equivalently, $\CS^T=\CB$. Now, Theorem \ref{sisc} applies.
\end{proof}

Theorem \ref{sisc} and Corollary \ref{tisff} generalize
\cite[Theorem 1.6 and Corollary 1.7]{CGW}, where the work is done
in the setting of abelian categories, and the proof stresses the
abelian structure. These results may be also compared with
\cite[Theorem 1.18]{CM}, where the framework is also that of
abelian categories.

We consider next other two subcategories of $\CA$ and $\CB$
respectively:
\[\CG_H=\{X\in\CA\mid\delta_X:(T\circ H)(X)\to X\hbox{ is an epimorphism}\},\]
\[\CG^T=\{Y\in\CB\mid\eta_Y:Y\to(H\circ T)(Y)\hbox{ is a monomorphism}\}.\]
The dual character of all considerations in the present Section
continues to hold for $\CG_H$ and $\CG^T$.

\begin{lemma}\label{gclosepi} The following statements hold:
\begin{itemize} \item[{\rm a)}] The subcategory $\CG_H$ (respectively $\CG^T$) is closed
under quotient objects (subobjects). \item[{\rm b)}] $\Ima
T\subseteq\CG_H$ (respectively $\Ima H\subseteq\CG^T$).
\end{itemize}
\end{lemma}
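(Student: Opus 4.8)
The plan is to prove the two statements of Lemma~\ref{gclosepi} directly from the definitions, and then invoke duality for the parenthetical versions, exactly as the paper does for its other dual pairs.

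\textbf{Part a).} Suppose $X\in\CG_H$ and let $p:X\to X'$ be an epimorphism in $\CA$; I want $\delta_{X'}:(T\circ H)(X')\to X'$ to be an epimorphism. Naturality of $\delta$ gives the commutative square with top arrow $(T\circ H)(p)$, left arrow $\delta_X$, right arrow $\delta_{X'}$, and bottom arrow $p$. Going around the square, $\delta_{X'}\circ(T\circ H)(p)=p\circ\delta_X$, and the right-hand side is a composite of two epimorphisms (here $\delta_X$ is epi because $X\in\CG_H$, and $p$ is epi by assumption), hence an epimorphism. A morphism that has an epimorphism as a factor through it on the left — i.e. $\delta_{X'}\circ(\text{something})$ is epi — is itself an epimorphism (if $f\circ g$ is epi then $f$ is epi). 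Therefore $\delta_{X'}$ is an epimorphism, so $X'\in\CG_H$.

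\textbf{Part b).} Let $X=T(Y)$ for some $Y\in\CB$; I must show $\delta_{T(Y)}:(T\circ H\circ T)(Y)\to T(Y)$ is an epimorphism. But the right-hand triangle of diagram~(\ref{diaadj}) shows that $\delta_{T(Y)}\circ T(\eta_Y)=1_{T(Y)}$, so $\delta_{T(Y)}$ is a split epimorphism, in particular an epimorphism. Since $\CG_H$ is closed under isomorphisms, this gives $\Ima T\subseteq\CG_H$. (In fact part b) also follows from part a) together with the inclusion $\CS_H\subseteq\Ima T$ of Lemma~\ref{inclusions} once one knows $\CS_H\subseteq\CG_H$, but the split-epimorphism argument is more direct.)

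\textbf{Duality.} The parenthetical statements about $\CG^T$ are obtained by applying the above to the opposite categories: passing to $\CA\opp$ and $\CB\opp$ interchanges $T$ and $H$, interchanges $\delta$ and $\eta$, and interchanges epimorphisms with monomorphisms and quotient objects with subobjects, turning the two assertions about $\CG_H$ into the two assertions about $\CG^T$. I do not expect any real obstacle here: the only point requiring a moment's care is the elementary cancellation fact that a left factor of an epimorphism is an epimorphism, used in Part a); everything else is formal manipulation of the adjunction triangles already displayed in~(\ref{diaadj}).
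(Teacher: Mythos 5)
Your proof is correct and follows essentially the same route as the paper: naturality of $\delta$ combined with left-cancellation of epimorphisms for part a), and the triangle identity $\delta_{T(Y)}\circ T(\eta_Y)=1_{T(Y)}$ exhibiting $\delta_{T(Y)}$ as a split epimorphism for part b), with the $\CG^T$ statements obtained by duality. (Your parenthetical alternative derivation of b) from a) via $\CS_H\subseteq\Ima T$ does not actually yield the inclusion $\Ima T\subseteq\CG_H$, but you rightly do not rely on it.)
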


\begin{proof} a) Let $\alpha:X'\to X$ be an epimorphism in $\CA$ with
$X'\in\CG_H$. Since $\delta$ is natural, we obtain the equality
$\alpha\delta_{X'}=\delta_X(T\circ H)(\alpha)$, showing that
$\delta_X$ is an epimorphism together with $\alpha\delta_{X'}$.

b) From the diagrams (\ref{diaadj}), we see that $\delta_{T(Y)}$
is right invertible, so it is an epimorphism for any $Y\in\CB$.
Thus $\Ima T\subseteq\CG_H$.
\end{proof}

The subcategory $\CG_H$ of $\CA$ is more interesting in the case
when $\CA$ has {\em epimorphic images}, what means that it has
images and the factorization of a morphism through its image is a
composition of an epimorphism followed by a monomorphism (for
example, $\CA$ has epimorphic images, provided that it has
equalizators and images, by \cite[Chapter 1, Proposition
10.1]{BM}). Suppose also that $\CA$ is {\em balanced}, that is
every morphism which is both epimorphism and monomorphism is an
isomorphism. Thus every factorization of a morphism as a
composition of an epimorphism followed by a monomorphism is a
factorization through image, by \cite[Chapter 1, Proposition
10.2]{BM}. With these hypotheses it is not hard to see that the
factorization of a morphism through its image is functorial, that
means the assignment $\alpha\mapsto\ima\alpha$ defines a functor
$\CA^\to\to\CA$.

\begin{proposition}\label{imradj}
If $\CA$ is a balanced category with epimorphic images, then the
functor $\CA\to\CG_H$, $X\mapsto\ima\delta_X$ is a right adjoint
of the inclusion functor $\CG_H\to\CA$.
\end{proposition}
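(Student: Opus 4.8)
The plan is to imitate the mechanism already established in Lemma~\ref{fractions}: if every object $X\in\CA$ admits a morphism to an object of $\CG_H$ which is suitably universal, then the assignment is functorial and left/right adjoint to the inclusion. Here the candidate assignment is $X\mapsto\ima\delta_X$, together with the canonical monomorphism $\iota_X:\ima\delta_X\to X$ coming from the image factorization $\delta_X:(T\circ H)(X)\twoheadrightarrow\ima\delta_X\xrightarrow{\iota_X}X$. Since $\CA$ is balanced with epimorphic images, this factorization is functorial, so $X\mapsto\ima\delta_X$ is already a functor $\CA^\to\to\CA$ restricted along $X\mapsto\delta_X$; what remains is to check (a) that $\ima\delta_X$ really lies in $\CG_H$, and (b) that $\iota_X$ has the universal property of the counit of an adjunction between $\CG_H\hookrightarrow\CA$ and $X\mapsto\ima\delta_X$, i.e. every morphism $G\to X$ with $G\in\CG_H$ factors uniquely through $\iota_X$.

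First I would verify (a). The morphism $\delta_{(T\circ H)(X)}:(T\circ H\circ T\circ H)(X)\to (T\circ H)(X)$ is a split epimorphism by the triangle identities (diagrams~(\ref{diaadj})), in particular an epimorphism, so $(T\circ H)(X)\in\CG_H$ by Lemma~\ref{gclosepi}b) (indeed $(T\circ H)(X)\in\Ima T$). Now $\ima\delta_X$ is a quotient object of $(T\circ H)(X)$ via the epi part of the image factorization, so by Lemma~\ref{gclosepi}a) we get $\ima\delta_X\in\CG_H$. Next I would check (b). Let $G\in\CG_H$ and let $f:G\to X$ be any morphism. Naturality of $\delta$ gives $f\circ\delta_G=\delta_X\circ(T\circ H)(f)$; since $\delta_G$ is an epimorphism, $f$ factors through the image of $\delta_X$ provided one pushes the factorization through carefully. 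Concretely: applying the functorial image factorization to the square relating $\delta_G$ and $\delta_X$ along $(T\circ H)(f)$ and $f$, one obtains a morphism $\bar f:\ima\delta_G\to\ima\delta_X$ with $\iota_X\circ\bar f=f\circ\iota_G$. Because $G\in\CG_H$, the map $\delta_G$ is epi, hence its image $\iota_G:\ima\delta_G\to G$ is both epi (epi part composed into the mono part, and $\delta_G=\iota_G\circ(\text{epi})$ epi forces $\iota_G$ epi) and mono, so $\iota_G$ is an isomorphism as $\CA$ is balanced. Thus $f=(\iota_X\circ\bar f)\circ\iota_G^{-1}$ factors through $\iota_X$. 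Uniqueness of the factorization is immediate since $\iota_X$ is a monomorphism.

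Assembling these, for each $X$ the morphism $\iota_X:\ima\delta_X\to X$ is terminal among morphisms from $\CG_H$ into $X$, which is exactly the statement that $I:\CG_H\to\CA$ has a right adjoint with counit $\iota$; functoriality of $X\mapsto\ima\delta_X$ was noted above from the functoriality of image factorizations (or can be re-derived from the universal property by the standard argument, as in Lemma~\ref{fractions}).

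The main obstacle I anticipate is step (b), specifically showing cleanly that the induced comparison $\iota_G:\ima\delta_G\to G$ is an isomorphism for $G\in\CG_H$ and that every $f:G\to X$ therefore routes through $\iota_X$: one must use the functoriality of the image factorization (which rests on $\CA$ being balanced with epimorphic images) to produce $\bar f$ compatibly, rather than trying to lift $f$ by hand. Once that functoriality is invoked, the adjunction falls out formally, and the remaining verifications (that $\ima\delta_X$ lands in $\CG_H$, uniqueness of factorization through a mono) are routine.
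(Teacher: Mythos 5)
Your proposal is correct and follows essentially the same route as the paper: both establish $\ima\delta_X\in\CG_H$ via Lemma~\ref{gclosepi}, and both obtain the factorization of a morphism $G\to X$ (with $G\in\CG_H$) through $\ima\delta_X$ from naturality of $\delta$ together with the epimorphy of $\delta_G$, with uniqueness coming from $\ima\delta_X\to X$ being a monomorphism. Your detour through the comparison map $\bar f$ and the isomorphism $\iota_G$ is just a repackaging of the paper's direct computation $\ima\alpha=\ima(\alpha\delta_{G})=\ima(\delta_X(T\circ H)(\alpha))\subseteq\ima\delta_X$.
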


\begin{proof} By hypothesis $\ima\delta_{X}$ is a
quotient of $(H\circ T)(Y)$ and $H(T(Y))\in\CG_H$, so the functor
$\CA\to\CG_H$, $X\mapsto\ima\delta_X$ is well defined, by Lemma
\ref{gclosepi}. Let now $\alpha:X'\to X$ in $\CA^\to$, where
$X'\in\CG_H$ and $X\in\CA$. Since $\delta_{X'}$ is an epimorphism,
it follows
\[\ima\alpha=\ima\left(\alpha\delta_{X'}\right)=\ima\left(\delta_X(T\circ
H)(\alpha)\right)\subseteq\ima\delta_X,\] so $\alpha$ factors
through $\ima\delta_X$. This means that the map
\[\CA(X',\ima\delta_X)\to\CA(X',X)\] is surjective. But it is also
injective since the functor $\CA(X',-)$ preserves monomorphisms,
and the conclusion follows.
\end{proof}

\begin{corollary}\label{csubsetg}
If $\CA$ is a balanced category with epimorphic images, then the
morphism $\ima\delta_X\to X$ is an $H$-equivalence and
$\CC_H\subseteq\CG_H$.
\end{corollary}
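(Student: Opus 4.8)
The plan is to exploit the functoriality of the image factorization, established just before the statement, together with the naturality of $\delta$ and the two-out-of-three property from Lemma \ref{2out3}.

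First I would show that $\ima\delta_X\to X$ is an $H$-equivalence. By Lemma \ref{gclosepi}, $\ima\delta_X$ is a quotient of $(T\circ H)(X)$, so there is a factorization $(T\circ H)(X)\twoheadrightarrow\ima\delta_X\hookrightarrow X$ whose composite is $\delta_X$. Applying $H$ and using diagrams (\ref{diaadj}), the morphism $H(\delta_X)$ is right invertible (since $\eta_{H(X)}$ splits it), hence in particular $H(\delta_X)$ is an epimorphism in $\CB$; I would want moreover that $H$ sends the epi part of the factorization to an epimorphism and the mono part to a monomorphism, so that applying $H$ to the factorization $(T\circ H)(X)\twoheadrightarrow\ima\delta_X\hookrightarrow X$ and invoking that $\CA$ is balanced with epimorphic images forces $H(\ima\delta_X\hookrightarrow X)$ to be an isomorphism — concretely, one checks that $H$ of the epimorphism is a split epi (again via (\ref{diaadj}), since $T(\eta_{H(X)})$ provides a section after applying $T$, whose image lands appropriately) and then the composite being split with $H$ of the inclusion a mono yields the claim. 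The cleanest route is probably: $H(\delta_X)$ factors as $H$(epi)$\circ H$(mono), it is a split mono (since $\eta_{H(X)}$ is a section of $H(\delta_X)$, wait — $H(\delta_X)\circ\eta_{H(X)}=1$, so $H(\delta_X)$ is a split epi and $\eta_{H(X)}$ is a split mono); since $H(\delta_X)$ is a split epi factoring through $H(\ima\delta_X)$, the map $H(\ima\delta_X)\to H(X)$ is itself a split epi, and it is also a mono because $\CA(X',-)$-type preservation — more precisely because the mono $\ima\delta_X\hookrightarrow X$ becomes, after $H$, ... this is the delicate point.

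For the second assertion, $\CC_H\subseteq\CG_H$: let $X\in\CC_H$. By Proposition \ref{imradj} the inclusion $\CG_H\to\CA$ has a right adjoint with counit $\ima\delta_X\to X$, which we have just shown is an $H$-equivalence. Then Lemma \ref{smallest}, applied with $\CC=\CG_H$, $I\circ R=\ima\delta_{(-)}$, and $\mu_X=(\ima\delta_X\to X)$, gives immediately that $\mu_X$ is an isomorphism for every $X\in\CC_H$, whence $\CC_H\subseteq\CG_H$. This second step is essentially free once the first is in place.

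The main obstacle is the first claim: verifying carefully that the inclusion $\ima\delta_X\hookrightarrow X$ is an $H$-equivalence, i.e. that $H$ applied to it is an isomorphism. The subtlety is that $H$ need not preserve the epi-mono factorization in general, so one cannot simply say "$H$ of a mono is a mono." The argument must instead leverage that $H(\delta_X)=H(\ima\delta_X\hookrightarrow X)\circ H((T\circ H)(X)\twoheadrightarrow\ima\delta_X)$ together with the splitting $H(\delta_X)\circ\eta_{H(X)}=1_{H(X)}$ from (\ref{diaadj}): this shows $H(\ima\delta_X\hookrightarrow X)$ is a split epimorphism. To conclude it is also a monomorphism — hence an isomorphism, as $\CB$ inherits the needed structure or at least split epis that are mono are iso in any category — I would use that $\ima\delta_X\hookrightarrow X$ is a monomorphism in $\CA$ and try to produce, via adjunction, an argument that it becomes mono after $H$; alternatively, and perhaps more robustly, observe that $\ima\delta_X$ is itself in $\Ima T$'s "quotient closure" $\CG_H$ and run a direct naturality square for $\delta$ on the morphism $\ima\delta_X\hookrightarrow X$ to identify $(T\circ H)(\ima\delta_X\hookrightarrow X)$ with an isomorphism and then read off that $H$ of the inclusion is invertible. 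Pinning down exactly which of these routes the ambient hypotheses (balanced, epimorphic images) make rigorous is where the real work lies.
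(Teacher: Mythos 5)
Your treatment of the second assertion is exactly the paper's: Proposition \ref{imradj} gives the right adjoint of $\CG_H\to\CA$ with counit $\ima\delta_X\to X$, and Lemma \ref{smallest} then yields $\CC_H\subseteq\CG_H$. Your first half also has the correct skeleton: $H(\delta_X)$ is a split epimorphism by the triangle identity in (\ref{diaadj}), and since $\delta_X$ factors as $(T\circ H)(X)\twoheadrightarrow\ima\delta_X\to X$, the morphism $H(\ima\delta_X)\to H(X)$ is a split epimorphism as well. But you stop short at the step you yourself flag as ``the delicate point,'' namely why $H(\ima\delta_X\to X)$ is a monomorphism, and you never resolve it; you even suggest the hypotheses ``balanced, epimorphic images'' might be what makes this rigorous. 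That is a genuine gap, and the missing fact is elementary and completely general: \emph{right adjoint functors preserve monomorphisms}. Indeed, if $\mu:\ima\delta_X\to X$ is a monomorphism, then for every $W\in\CB$ the map $\CB(W,H(\mu))\cong\CA(T(W),\mu)$ is injective, so $H(\mu)$ is a monomorphism in $\CB$ --- no hypotheses on $\CA$ or $\CB$ are needed for this, only the adjunction $T\dashv H$. (This is exactly the ``$\CA(X',-)$-type preservation'' you gesture at, but routed through the adjunction rather than left hanging.) A morphism that is simultaneously a split epimorphism and a monomorphism is an isomorphism in any category, which closes the argument; the hypotheses on $\CA$ are used only to form the functorial image factorization and to invoke Proposition \ref{imradj}, not for this step. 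Your alternative suggested route (running a naturality square for $\delta$ on the inclusion to show $(T\circ H)$ of it is invertible) would not obviously work and is in any case unnecessary.
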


\begin{proof} The second statement of the conclusion follows from the first one
by using  Proposition \ref{imradj} and Lemma \ref{smallest}. But
$H$ carries the monomorphism $\ima\delta_X\to X$ into a
monomorphism in $\CB$, because $H$ is a right adjoint. Moreover,
since $H(\delta_X)$ is right invertible, the same is true for the
morphism $H(\ima\delta_X)\to H(X)$, as we may see from the
commutative diagram
\[\diagram (H\circ T\circ
H)(X)\rrto^{\hskip5pt H(\delta_X)}\drto&&H(X)\\
&H(\ima\delta_X)\urto&\enddiagram.\]
\end{proof}

\begin{proposition}\label{wstarob}
Suppose both $\CA$ and $\CB$ are balanced categories with
epimorphic images. The following are equivalent:
\begin{itemize}
\item[{\rm (i)}] The pair of adjoint functors $T\dashv H$ induces
mutually inverse equivalences $\CC_H\rightleftarrows\CG^T$.
\item[{\rm (ii)}] $\eta_Y:Y\to(H\circ T)(Y)$ is an epimorphism for
all $Y\in\CB$.
\end{itemize}
\end{proposition}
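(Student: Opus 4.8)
The plan is to read both implications off the chain of inclusions
\[
\CS^T\subseteq\Ima H\subseteq\CC^T\subseteq\CG^T
\]
— its first two links being Lemma~\ref{inclusions}(b), the last one the dual of Corollary~\ref{csubsetg}, which is available because $\CB$ is balanced with epimorphic images — together with Theorem~\ref{sisc}. For (ii)$\Rightarrow$(i) I would argue that if every $\eta_Y$ is an epimorphism, then for $Y\in\CG^T$ the arrow $\eta_Y$ is at the same time a monomorphism (by the definition of $\CG^T$) and an epimorphism, hence an isomorphism since $\CB$ is balanced; so $\CG^T\subseteq\CS^T$ and the chain collapses to $\CS^T=\Ima H=\CC^T=\CG^T$. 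By Theorem~\ref{sisc} (the equivalence of (ii) and (v)), $H$ and $T$ then restrict to mutually inverse equivalences between $\CC_H$ and $\CC^T=\CG^T$, which is exactly (i).

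For (i)$\Rightarrow$(ii) I would first turn the hypothesis into a statement about a reflection. If (i) holds, the maximality of $\CS_H$ and $\CS^T$ recorded at the end of Section~\ref{not} forces $\CC_H\subseteq\CS_H$ and $\CG^T\subseteq\CS^T$; with the reverse inclusions (Lemma~\ref{inclusions}(a) and the chain above) this gives $\CS_H=\CC_H$ and $\CS^T=\CC^T=\CG^T$. In particular $\CS^T=\CC^T$, so all conditions of Theorem~\ref{sisc} hold, and by condition~(iv) we have $\CS_H=\Ima T$; then Remark~\ref{rladj} identifies $Y\mapsto(H\circ T)(Y)$ as a left adjoint of the inclusion $\CC^T=\CG^T\hookrightarrow\CB$, with unit $\eta$. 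It thus remains to show that the unit of this reflection is an epimorphism.

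This is the one step that needs an idea, and the idea is to use that $\CG^T$ is closed under subobjects (Lemma~\ref{gclosepi}(a)). Factoring $\eta_Y=m\circ e$ with $e\colon Y\to I$ an epimorphism and $m\colon I\to(H\circ T)(Y)$ a monomorphism (epimorphic images in $\CB$), the object $I$ is a subobject of $(H\circ T)(Y)\in\CG^T$, hence $I\in\CG^T=\CC^T$; the universal property of the reflection then yields $f\colon(H\circ T)(Y)\to I$ with $f\circ\eta_Y=e$, and since
\[
(m\circ f)\circ\eta_Y=m\circ e=\eta_Y=1_{(H\circ T)(Y)}\circ\eta_Y ,
\]
uniqueness in that universal property gives $m\circ f=1_{(H\circ T)(Y)}$. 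So $m$ is a split epimorphism; being also a monomorphism it is an isomorphism ($\CB$ balanced), and therefore $\eta_Y=m\circ e$ is an epimorphism.

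To summarize, the only non-formal point is the last paragraph, where the invertibility of $\eta$ on $\CG^T$ has to be promoted to the assertion that $\eta_Y$ is an epimorphism for \emph{every} $Y\in\CB$; I handle this by recognizing $Y\mapsto(H\circ T)(Y)$ as a reflection onto the subobject-closed subcategory $\CG^T$ and running the standard image-factorization argument for such reflections. (A variant that bypasses Remark~\ref{rladj}: under (i), $T(Y)$ is $\delta$-reflexive and $(H\circ T)(Y)$ is $T$-local, so (\ref{diaadj}) shows $\eta_Y$ is a $T$-equivalence; its epic part $e$ is one as well, since a left adjoint preserves epimorphisms and $T(e)$ is then both an epimorphism and a split monomorphism; and its monic part $m$, being a $T$-equivalence between $T$-local objects by the two-out-of-three property of Lemma~\ref{2out3}(b), is forced to be an isomorphism — whence $\eta_Y=m\circ e$ is an epimorphism.)
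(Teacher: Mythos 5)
Your proof is correct, and its overall skeleton matches the paper's: for (ii)$\Rightarrow$(i) both arguments reduce to Theorem~\ref{sisc}, and for (i)$\Rightarrow$(ii) both factor $\eta_Y$ through its image and use that $\CG^T$ contains $\Ima H$ and is closed under subobjects, so that $\ima\eta_Y\in\CG^T$. The differences are in how each step is closed out. In (ii)$\Rightarrow$(i) the paper shows $T(\eta_Y)$ is an epimorphism and split monomorphism, hence invertible, to get $\CS_H=\Ima T$ (condition (iv) of Theorem~\ref{sisc}), whereas you observe directly that for $Y\in\CG^T$ the map $\eta_Y$ is monic and epic, hence an isomorphism since $\CB$ is balanced, which collapses the chain to $\CS^T=\Ima H$ (condition (ii)); your route is marginally shorter and uses the balancedness of $\CB$ where the paper uses a split-mono argument. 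In (i)$\Rightarrow$(ii) the paper finishes by invoking the dual of Corollary~\ref{csubsetg} -- the epic part $Y\to\ima\eta_Y$ is a $T$-equivalence -- so that in the naturality square for $\eta$ the bottom arrow and $\eta_{\ima\eta_Y}$ are both isomorphisms, forcing $\eta_Y$ to be epic; you instead recognize $Y\mapsto(H\circ T)(Y)$ as a reflection onto $\CG^T=\CC^T$ (via Remark~\ref{rladj}) and use uniqueness in its universal property to split the monic part $m$, then conclude $m$ is invertible by balancedness. Both finishing moves are legitimate and of comparable length; your parenthetical variant, which derives the $T$-equivalence property of $e$ and $m$ from the diagrams (\ref{diaadj}) and the two-out-of-three property, is essentially the paper's argument in disguise.
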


\begin{proof} (i)$\Rightarrow$(ii). Denote $Y'=\ima\eta_Y$. Then the unit
$\eta_Y$ of adjunction factors as $Y\to Y'\to(H\circ T)(Y)$, where
the epimorphism $Y\to Y'$ is a $T$-equivalence by the dual of
Corollary \ref{csubsetg}, and $Y'\to(H\circ T)(Y)$ a monomorphism.
Since $(H\circ T)(Y)\in\Ima H\subseteq\CG^T$ and $\CG^T$ is closed
under subobjects, we deduce $Y'\in\CG^T$. Now (i) implies that
$\eta_{Y'}$ is an isomorphism, so the diagram
\[\diagram Y\rto\dto_{\eta_Y}&Y'\dto^{\eta_{Y'}}\\
    (H\circ T)(Y)\rto^{\cong}&(H\circ T)(Y')\enddiagram\]
proves (ii).

(ii)$\Rightarrow$(i). Condition (ii) implies that $T(\eta_Y)$ is
an epimorphism, for every $Y\in\CB$, since $T$ preserves
epimorphisms. But it is also left invertible by diagrams
\ref{diaadj}. Thus it is invertible, with the inverse
$\delta_{T(Y)}$. We have just shown that $\CS_H=\Ima T$, hence
Theorem \ref{sisc} tells us that $\CC_H$ and $\CC^T$ are
equivalent via $H$ and $T$. Finally, since $\CB$ is balanced,
clearly $\CG^T=\CS^T=\CC^T$.
\end{proof}

Combining Proposition \ref{wstarob} and its dual we obtain:

\begin{theorem}\label{starob}
Suppose both $\CA$ and $\CB$ are balanced categories with
epimorphic images. The following are equivalent:
\begin{itemize}
\item[{\rm (i)}] The pair of adjoint functors $T\dashv H$ induces
mutually inverse equivalences $\CG_H\rightleftarrows\CG^T$.
\item[{\rm (ii)}] $\delta_X:(T\circ H)(X)\to X$ is a monomorphism
for all $X\in\CA$ and $\eta_Y:Y\to(H\circ T)(Y)$ is an epimorphism
for all $Y\in\CB$.
\end{itemize}
\end{theorem}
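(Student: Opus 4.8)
The plan is to obtain Theorem~\ref{starob} by playing Proposition~\ref{wstarob} against its dual. Dualizing (interchange $\CA\leftrightarrow\CB$, $T\leftrightarrow H$, $\delta\leftrightarrow\eta$, epimorphism $\leftrightarrow$ monomorphism) the Proposition reads: $T\dashv H$ induces mutually inverse equivalences $\CG_H\rightleftarrows\CC^T$ if and only if $\delta_X:(T\circ H)(X)\to X$ is a monomorphism for every $X\in\CA$. So condition (ii) is exactly the conjunction of the condition~(ii) of Proposition~\ref{wstarob} and of the condition~(ii) of its dual, and from these we get for free an equivalence $\CC_H\rightleftarrows\CG^T$ and an equivalence $\CG_H\rightleftarrows\CC^T$. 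The actual content of the theorem is that under~(ii) these four subcategories collapse in pairs, $\CC_H=\CG_H$ and $\CC^T=\CG^T$, after which either of the two equivalences \emph{is} the asserted equivalence $\CG_H\rightleftarrows\CG^T$.

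For (ii)$\Rightarrow$(i) I would first recall, as inside the proof of Proposition~\ref{wstarob}, that $\eta_Y$ epic for all $Y$ makes $T(\eta_Y)$ an epimorphism which is already split monic by the triangle identity~(\ref{diaadj}), hence invertible; this gives $\Ima T\subseteq\CS_H$, and together with Lemma~\ref{inclusions}~a) it gives $\CS_H=\Ima T$, so $\CC_H=\CS_H$ by Theorem~\ref{sisc}. Now, since $\CA$ is balanced, any $X\in\CG_H$ has $\delta_X$ epic by definition of $\CG_H$ and monic by hypothesis, hence an isomorphism, so $X\in\CS_H$; combined with $\CC_H\subseteq\CG_H$ (Corollary~\ref{csubsetg}) this yields $\CG_H=\CS_H=\CC_H$, and the dual argument gives $\CG^T=\CS^T=\CC^T$. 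Feeding this into the equivalence $\CC_H\rightleftarrows\CG^T$ produced by Proposition~\ref{wstarob} finishes the implication.

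For (i)$\Rightarrow$(ii) I would use the maximality built into the definitions of Section~\ref{not}: $\CS_H$ and $\CS^T$ are the largest subcategories of $\CA$ and $\CB$ on which $H$ and $T$ restrict to mutually inverse equivalences, so the hypothesis forces $\CG_H\subseteq\CS_H$ and $\CG^T\subseteq\CS^T$; the reverse inclusions are automatic because an isomorphism is in particular epic (resp. monic), whence $\CG_H=\CS_H$ and $\CG^T=\CS^T$. Then $\Ima T\subseteq\CG_H=\CS_H\subseteq\Ima T$ by Lemma~\ref{gclosepi}~b) and Lemma~\ref{inclusions}~a), so $\CS_H=\Ima T$, dually $\CS^T=\Ima H$, and Theorem~\ref{sisc} upgrades both to $\CC_H=\CS_H=\CG_H$ and $\CC^T=\CS^T=\CG^T$. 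Hence the hypothesis is literally the statement that $T\dashv H$ induces equivalences $\CC_H\rightleftarrows\CG^T$ and $\CG_H\rightleftarrows\CC^T$, and Proposition~\ref{wstarob} together with its dual delivers the two halves of~(ii).

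The only delicate point, and the place I expect to spend the most care, is the repeated collapsing of the chains $\CS_H\subseteq\Ima T\subseteq\CC_H\subseteq\CG_H$ (and the dual chain in $\CB$) to a single class: each collapse needs \emph{both} halves of (ii) available simultaneously, or the full force of (i), together with balancedness so that ``monic plus epic equals iso'' may be applied to $\delta$ and to $\eta$. Beyond that, the argument is bookkeeping with Lemmas~\ref{inclusions} and~\ref{gclosepi}, Corollary~\ref{csubsetg}, Theorem~\ref{sisc}, and Proposition~\ref{wstarob}, with no new computation.
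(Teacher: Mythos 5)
Your proposal is correct and follows exactly the route the paper intends: the paper's entire proof is the single line ``Combining Proposition \ref{wstarob} and its dual we obtain,'' and your argument is precisely that combination with the bookkeeping (the collapses $\CG_H=\CS_H=\CC_H$ and $\CG^T=\CS^T=\CC^T$ via balancedness, Lemma \ref{inclusions}, Lemma \ref{gclosepi}, Corollary \ref{csubsetg} and Theorem \ref{sisc}) written out in full. Nothing is missing; you have simply supplied the details the paper leaves implicit.
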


Remark that \cite[Proposition 2.2.4 and Theorem 2.3.8]{CF} provide
characterizations of (weak) $*$-modules which are analogous to
Proposition \ref{wstarob} and Theorem \ref{starob} above. These
results will be used in Section \ref{nonadd}, for defining the
corresponding notions in a non additive situation.

\section{Representable equivalences}\label{repeq}

Overall in this section $\CA$ is a cocomplete category and $\CE$
is small category. Denote by $[\CE\opp,\Set]$ the category of all
contravariant functors from $\CE$ into $\Set$. Then we view $\CE$
as a subcategory of $[\CE\opp,\Set]$, via the Yoneda embedding
$\CE\to[\CE\opp,\Set]$, $e\mapsto\CE(-,e)$. For simplicity, we
shall write $[Y',Y]$ for $[\CE\opp,\Set](Y',Y)$, where
$Y',Y\in[\CE\opp,\Set]$. For every $Y\in[\CE\opp,\Set]$ denote by
$\CE\downarrow Y$ the comma category whose objects are of the form
$(e,y)$ with $e\in\CE$ and $y\in Y(e)$ and whose morphisms are
\[(\CE\downarrow Y)((e',y'),(e,y))=\{\alpha\in\CE(e',e)\mid Y(\alpha)(y')=y\}.\]
The projection functor $\CE\downarrow Y\to\CE$ is given by
$(e,y)\mapsto e$ and $\alpha\mapsto\alpha$ for all
$(e,y)\in\CE\downarrow Y$ and all $\alpha\in(\CE\downarrow
Y)((e',y'),(e,y))$. Observe then that the subcategory $\CE$ is
{\em dense} in $[\CE\opp,\Set]$, what means, for every
$Y\in[\CE\opp,\Set]$ it holds \[Y\cong\colim((\CE\downarrow
Y)\to\CE\to[\CE\opp,\Set])=\colim_{(e,y)\in\CE\downarrow
Y}\CE(-,e),\] where the last notation is a shorthand for the
previous colimit.

For a functor $A:\CE\to\CA$, consider the left Kan extension of
$A$ along the Yoneda embedding:
\[T_A:[\CE\opp,\Set]\to\CA, T_A(Y)=\colim_{(e,y)\in\CE\downarrow
Y}A(e),\] which may be characterized as the unique, up to a
natural isomorphism, colimit preserving functor
$[\CE\opp,\Set]\to\CA$, mapping $\CE(-,e)$ into $A(e)$ for all
$e\in\CE$.  The functor $T_A$ has a right adjoint, namely the
functor
\[H_A:\CA\to[\CE\opp,\Set], H_A(X)=\CA(A(-),X).\] In order to use the results of Section
\ref{not}, we remaind the notations made there, namely let
$\delta:T_A\circ H_A\to\id_\CA$ and $\eta:\id_\CB\to H_A\circ T_A$
be the arrows of adjunction. For simplicity we shall replace in
the next considerations the subscript $H_A$ and the superscript
$T_A$ with $A$. So objects in $\CC_A$, $\CC^A$, $\CG_A$ and
$\CG^A$ will be called {\em$A$-colocal}, {\em$A$-local},
{\em$A$-generated}, respectively {\em$A$-cogenerated}.

We consider overall in this section two subcategories
$\CC\subseteq\CA$ and $\CC'\subseteq[\CE\opp,\Set]$, such that
$\CC$ is closed under taking colimits and retracts and $\CC'$ is
closed under taking limits and retracts.

\begin{lemma}\label{repadj} Let $H:\CC\rightleftarrows\CC':T$ be a pair of adjoint
functors $T\dashv H$, and denote $A:\CE\to\CA$ the functor given
by $A(e)=T(\CE(-,e))$. If $\CE(-,e)\in\CC'$ for all $e\in\CE$ then
$H$ is naturally isomorphic to the restriction of $H_A$ and $T$ is
naturally isomorphic to the restriction of $T_A$, $\Ima
H_A\subseteq\CC'$ and $\Ima T_A\subseteq\CC$. If moreover $T$ is
fully faithful, then arrow $\delta_X:(T_A\circ H_A)(X)\to X$ is an
$H_A$-equivalence for all $X\in\CA$.
\end{lemma}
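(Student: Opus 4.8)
The plan is to establish, in turn, that $\Ima T_A\subseteq\CC$, that $H$ agrees with the restriction of $H_A$ to $\CC$, that $\Ima H_A\subseteq\CC'$, that $T$ agrees with the restriction of $T_A$ to $\CC'$, and finally, under the extra hypothesis, that $\delta_X$ is an $H_A$-equivalence. Each step uses the ones before it, so the order differs from that of the statement.

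First I would check $\Ima T_A\subseteq\CC$: for any $Y\in[\CE\opp,\Set]$ the object $T_A(Y)=\colim_{(e,y)\in\CE\downarrow Y}A(e)$ is a (small) colimit of the objects $A(e)=T(\CE(-,e))$, each of which lies in $\CC$ since $T$ maps $\CC'$ into $\CC$ and $\CE(-,e)\in\CC'$; as $\CC$ is closed under colimits, $T_A(Y)\in\CC$. Next, for $X\in\CC$ I would compute, using the two adjunctions and the Yoneda lemma,
\[H_A(X)(e)=\CA(T(\CE(-,e)),X)\cong\CC'(\CE(-,e),H(X))\cong H(X)(e),\]
where the middle isomorphism is the adjunction $T\dashv H$ on $\CC\rightleftarrows\CC'$ (all objects involved belong to these full subcategories, so the hom-sets agree with those of $\CA$ and of $[\CE\opp,\Set]$) and the last one is Yoneda together with $\CE(-,e),H(X)\in\CC'$. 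These isomorphisms being natural in $e$ and in $X$, we get $H_A|_\CC\cong H$; in particular $H_A$ sends $\CC$ into $\CC'$.

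The step I expect to be the main obstacle is $\Ima H_A\subseteq\CC'$, since there is no reason for a general presheaf $H_A(X)$ to be a limit of representables. The device I would use is the retract exhibited by the triangle identities (\ref{diaadj}): for every $X\in\CA$ the object $H_A(X)$ is a retract of $H_AT_AH_A(X)=H_A\bigl(T_AH_A(X)\bigr)$, and $T_AH_A(X)\in\Ima T_A\subseteq\CC$ by the first step, whence $H_A\bigl(T_AH_A(X)\bigr)\in\CC'$ by the previous paragraph; as $\CC'$ is closed under retracts, $H_A(X)\in\CC'$. For $T\cong T_A|_{\CC'}$, fix $Y\in\CC'$: both $T_A(Y)$ and $T(Y)$ lie in $\CC$, so by Yoneda in $\CC$ it suffices to exhibit a natural isomorphism of functors of $X\in\CC$,
\[\CA(T_A(Y),X)\cong[\,Y,H_A(X)\,]\cong[\,Y,H(X)\,]=\CC'(Y,H(X))\cong\CC(T(Y),X),\]
coming from $T_A\dashv H_A$, the identification $H_A(X)\cong H(X)$ just obtained, fullness of $\CC'$, and $T\dashv H$.

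Finally, assume $T$ fully faithful. Then $T_A|_{\CC'}$ is fully faithful, so for $Y,Y'\in\CC'$ the map $\CC'(Y,Y')\to\CA(T_A(Y),T_A(Y'))$ is bijective; composing it with the adjunction bijection $\CA(T_A(Y),T_A(Y'))\cong[\,Y,H_AT_A(Y')\,]$ gives, by naturality of $\eta$, precisely post-composition with $\eta_{Y'}$. Since $H_AT_A(Y')\in\CC'$ by the previous step, Yoneda in $\CC'$ now forces $\eta_{Y'}$ to be an isomorphism, for every $Y'\in\CC'$. As $H_A(X)\in\CC'$, the unit $\eta_{H_A(X)}$ is an isomorphism, and the triangle identity $H_A(\delta_X)\circ\eta_{H_A(X)}=1_{H_A(X)}$ then makes $H_A(\delta_X)$ an isomorphism; hence $\delta_X$ is an $H_A$-equivalence for all $X\in\CA$. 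The only routine care needed throughout is to track which hom-sets are computed in a subcategory versus in the ambient category, which is harmless because every subcategory here is full and closed under isomorphisms.
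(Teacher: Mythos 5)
Your proof is correct and follows essentially the same route as the paper: $\Ima T_A\subseteq\CC$ via the colimit formula and closure of $\CC$ under colimits, $H\cong H_A|_\CC$ via Yoneda and the adjunction, $\Ima H_A\subseteq\CC'$ via the retract coming from the triangle identity, $T\cong T_A|_{\CC'}$ by uniqueness of adjoints, and the last claim from $T$ fully faithful. Your final step is in fact spelled out more carefully than the paper's (which just invokes $H\circ T\cong\id_{\CC'}$), since you verify explicitly that the unit $\eta_{Y'}$ of the ambient adjunction $T_A\dashv H_A$ is invertible for $Y'\in\CC'$ before applying the triangle identity to $\eta_{H_A(X)}$.
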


\begin{proof} Using Yoneda lemma, we
have the natural isomorphisms for every $X\in\CC$, and every
$e\in\CE$:
\[H(X)(e)\cong[\CE(-,e),H(X)]\cong\CA(T(\CE(-,e)),X)=\CA(A(e),X)=H_A(X)(e),\]
thus $H(X)\cong H_A(X)$ naturally. Since $A(e)=T(\CE(-,e))\in\CC$
for all $e\in\CE$, the closure of $\CC$ under colimits and the
formula
\[T_A(Y)=\colim_{(e,y)\in\CE\downarrow Y}A(e),\] valid for all
$Y\in[\CE\opp,\Set]$, show that $\Ima T_A\subseteq\CC$. Therefore
the assignment $Y\mapsto T_A(Y)$ defines a functor $\CC'\to\CC$,
which is naturally isomorphic to $T$, as right adjoints of $H$.

Further $\Ima T_A\subseteq\CC$ implies $\Ima H_A\subseteq\CC'$,
since for all $X\in\CA$, we have  \[(H_A\circ T_A\circ
H_A)(X)\cong H((T_A\circ H_A)(X))\in\CC',\] $H_A(X)$ is a retract
of $(H_A\circ T_A\circ H_A)(X)$ and $\CC'$ is closed under
retracts.

Now, the fully faithfulness of $T$ is equivalent to the fact that
$H\circ T\cong\id_{\CC'}$ naturally, thus $\Ima H_A\in\CC$ implies
\[(H_A\circ T_A\circ H_A)(X)\cong(H\circ T)(H_A(X))\cong
H_A(X),\] what means that $\delta_X$ is an $H_A$-equivalence.
\end{proof}

\begin{theorem}\label{repres} Let $H:\CC\rightleftarrows\CC':T$ be
mutually inverse equivalences of categories, and denote
$A:\CE\to\CA$ the functor given by $A(e)=T(\CE(-,e))$. If
$\CE(-,e)\in\CC'$ for all $e\in\CE$ then $H$ is naturally
isomorphic to the restriction of $H_A$, $T$ is naturally
isomorphic to the restriction of $T_A$, $\CC=\CC_A$ and
$\CC'=\CC^A$.
\end{theorem}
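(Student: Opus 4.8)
The plan is to deduce Theorem~\ref{repres} from Lemma~\ref{repadj} together with Theorem~\ref{sisc}. Since $H\colon\CC\rightleftarrows\CC'\colon T$ are mutually inverse equivalences, in particular $T\dashv H$ is an adjoint pair with $T$ (and $H$) fully faithful, so Lemma~\ref{repadj} applies verbatim: we get $H\cong H_A|_{\CC}$ and $T\cong T_A|_{\CC'}$ naturally, together with the inclusions $\Ima H_A\subseteq\CC'$ and $\Ima T_A\subseteq\CC$, and the fact that $\delta_X\colon(T_A\circ H_A)(X)\to X$ is an $H_A$-equivalence for every $X\in\CA$. Dually, using that $H$ is fully faithful and $\CE(-,e)\in\CC'$, the arrow $\eta_Y\colon Y\to(H_A\circ T_A)(Y)$ is a $T_A$-equivalence for every $Y\in[\CE\opp,\Set]$; symmetrically one checks $\Ima T_A\subseteq\CC'$-type statements are not needed, only the two equivalence facts.

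First I would establish $\CC\subseteq\CC_A$. Take $X\in\CC$; I must show $X$ is $H_A$-colocal, i.e. $\CA(X,\epsilon)$ is bijective for every $H_A$-equivalence $\epsilon$. The key point is that $\Ima T_A\subseteq\CC$ and, since $T$ is an equivalence onto $\CC$, every object of $\CC$ lies in $\Ima T_A$ up to isomorphism; more precisely $X\cong T_A(H_A(X))$ because $\delta_X$ is an $H_A$-equivalence \emph{and} $X\in\CC\cong\CS^{T}$-image forces $\delta_X$ itself to be an isomorphism (this is exactly the content that $\CC=\Ima T_A$ restricted appropriately, which follows from $T$ being essentially surjective onto $\CC$). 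Once $X$ is (isomorphic to) $T_A(Y)$, Lemma~\ref{inclusions}a gives $\Ima T_A\subseteq\CC_{H_A}=\CC_A$, so $X\in\CC_A$. Dually $\CC'\subseteq\CC^A$ via Lemma~\ref{inclusions}b.

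Next I would prove the reverse inclusions $\CC_A\subseteq\CC$ and $\CC^A\subseteq\CC'$ using Lemma~\ref{smallest} and its dual. For $\CC_A\subseteq\CC$: the inclusion $I\colon\CC\to\CA$ has a right adjoint, namely $X\mapsto T_A(H_A(X))$ (this is $T\circ H_A$, and $T_A(H_A(X))\in\Ima T_A\subseteq\CC$, while adjointness transfers along the natural isomorphism $T\cong T_A|_{\CC'}$, $H\cong H_A|_{\CC}$ from the adjunction $T\dashv H$ on $\CC,\CC'$); and the counit of that adjunction is precisely $\delta_X\colon(T_A\circ H_A)(X)\to X$, which Lemma~\ref{repadj} tells us is an $H_A$-equivalence for all $X\in\CA$. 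Lemma~\ref{smallest} then yields $\CC_{H_A}=\CC_A\subseteq\CC$. Dually, $\CC^A\subseteq\CC'$. Combining both directions gives $\CC=\CC_A$ and $\CC'=\CC^A$.

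The main obstacle I anticipate is the bookkeeping around \emph{which} adjoint functor plays the role of $R$ in Lemma~\ref{smallest}: one must verify carefully that $T_A\circ H_A$ really is right adjoint to the inclusion $\CC\hookrightarrow\CA$ — this needs the natural isomorphisms $H\cong H_A|_\CC$, $T\cong T_A|_{\CC'}$ \emph{and} that $H$ is a full embedding so that the adjunction $T\dashv H$ between $\CC$ and $\CC'$ glues with the adjunction $T_A\dashv H_A$ between $[\CE\opp,\Set]$ and $\CA$ to produce an adjunction between $\CC$ and $\CA$ whose counit is $\delta$. A clean way around this is to observe that $\CC'\cong\CC$ is equivalent to $\CB$ via the given equivalence, and then invoke Corollary~\ref{tisff}: the adjoint pair $T_A\dashv H_A$, once we know $T_A$ restricted is fully faithful with image inside $\CC$, forces the equivalence picture; but I would keep the $\CC,\CC'$ not assumed to be all of $\CB$, so I would spell out the adjunction gluing explicitly rather than citing Corollary~\ref{tisff} directly. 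The $\CG$-versions are not needed here. Everything else is formal and follows from Lemmas~\ref{inclusions}, \ref{smallest} and \ref{repadj}.
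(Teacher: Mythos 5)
Your proposal is correct and follows essentially the same route as the paper: Lemma~\ref{repadj} for the identifications $H\cong H_A|_{\CC}$, $T\cong T_A|_{\CC'}$ and for $\delta_X$ being an $H_A$-equivalence; $\Ima T_A\subseteq\CC_A$ (Lemma~\ref{inclusions}) for $\CC\subseteq\CC_A$; and exhibiting $X\mapsto(T_A\circ H_A)(X)$ as a right adjoint of $\CC\to\CA$ so that Lemma~\ref{smallest} gives $\CC_A\subseteq\CC$, with the primed statements by duality. The ``adjunction gluing'' you flag as the main obstacle is exactly what the paper verifies, via a short chain of natural isomorphisms $\CA(X',(T_A\circ H_A)(X))\cong\cdots\cong\CA(X',X)$ using $X'\cong(T_A\circ H_A)(X')$ and $H_A(X)\cong(H_A\circ T_A\circ H_A)(X)$.
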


\begin{proof}
The conclusions concerning $H$ and $T$ follow by Lemma
\ref{repadj}. For the rest, we have for all $X\in\CC$:
\[X\cong(T\circ H)(X)\cong(T_A\circ H_A)(X)\in\Ima
T_A\subseteq\CC_A\] as we have seen in Lemma \ref{inclusions}.
Thus $\CC\subseteq\CC_A$, and dually $\CC'\subseteq\CC^A$.

The functor $\CA\to\CC$ given by $X\mapsto(T_A\circ H_A)(X)$ is a
right adjoint of the inclusion functor $\CC\to\CA$. Indeed, for
all $X'\in\CC$ and all $X\in\CA$, we obtain $X'\cong(T\circ
H)(X')\cong(T_A\circ H_A)(X')$ and $H_A(X)\cong(H_A\circ T_A\circ
H_A)(X)$ since the counit $\delta_X$ of adjunction is an
$H_A$-equivalence, as we observed in Lemma \ref{repadj}. Now the
natural isomorphisms
\begin{align*} \CA(X',(T_A\circ H_A)(X))&\cong\CA((T_A\circ
H_A)(X'),(T_A\circ H_A)(X))\\ &\cong[H_A(X'),(H_A\circ T_A\circ
H_A)(X)]\cong[H_A(X'),H_A(X)]\\ &\cong\CA((T_A\circ
H_A)(X'),X)\cong\CA(X',X)\end{align*} prove our claim. Using again
the fact that $\delta_X:(T_A\circ H_A)(X)\to X$ is an
$H_A$-equivalence, Lemma \ref{smallest} tells us that
$\CC_A\subseteq\CC$. The functor $[\CE\opp,\Set]\to\CC'$ given by
$Y\mapsto(H_A\circ T_A)(Y)$ is also well defined. In a dual manner
we show that it is a left adjoint of the inclusion
$\CC'\to[\CE\opp,\Set]$, and follows $\CC^A\subseteq\CC'$.
\end{proof}

The equivalences $H:\CC\rightleftarrows\CC':T$ are called {\em
represented} by $A:\CE\to\CA$ provided that $H\cong H_A$ and
$T\cong T_A$ as in the Theorem \ref{repres}.

In the work \cite{MO} of Menini and Orsatti (see also \cite{CF}),
it is given an additive version of Theorem \ref{repres}. There,
our category $\CE$ is preadditive with a single object (that means
it is a ring), $A$ is an object in $\CA$ with endomorphism ring
$\CE$ (therefore $A:\CE\to\CA$ is a fully faithful functor), and
$[\CE\opp,\Set]$ is replaced with $\Md(\CE)$.

\section{The existence of cellular covers}\label{ecc}

In this Section consider as in the previous one a cocomplete
category $\CA$, a functor $A:\CE\to\CA$, where $\CE$ is a small
category and construct its left Kan extension
$T_A:[\CE\opp,\Set]\to\CA$ along the Yoneda embedding
$\CE\to[\CE\opp,\Set]$ which has the right adjoint
$H_A:\CA\to[\CE\opp,\Set]$. In addition suppose that $A$ is fully
faithful. Note that, this additional assumption means that the
category $\CE$ may be identified with a (small) subcategory of
$\CA$ and $A$ with the inclusion functor. For example, if $\CE$
has a single object, then $A$ may be identified with an object of
$\CA$.

\begin{lemma}\label{smallg}
If $\CA$ is a cocomplete, balanced category with epimorphic images
and $A:\CE\to\CA$ is fully faithful, then it holds:
\begin{itemize}
\item[{\rm a)}] $A(e)\in\CS_A$ for all $e\in\CE$. \item[{\rm b)}]
An object $X\in\CA$ is $A$-generated exactly if there is an
epimorphism $A'\to X$ with $A'$ a coproduct of objects of the form
$A(e)$ with $e\in\CE$.
\end{itemize}
\end{lemma}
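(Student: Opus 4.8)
The plan is to prove each part separately, using the identification of $\CE$ with a small full subcategory of $\CA$ (via the fully faithful $A$) and the description of $T_A$ as the colimit-preserving extension sending $\CE(-,e)$ to $A(e)$.

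For part a), I would show directly that $\delta_{A(e)}\colon (T_A\circ H_A)(A(e))\to A(e)$ is an isomorphism. The key point is that $A$ fully faithful means the composite $\CE\to[\CE\opp,\Set]\xrightarrow{T_A}\CA$ is (isomorphic to) the inclusion $A$, and that $H_A(A(e))=\CA(A(-),A(e))\cong\CE(-,e)$ by full faithfulness. Hence $(T_A\circ H_A)(A(e))\cong T_A(\CE(-,e))\cong A(e)$, and one checks this isomorphism is $\delta_{A(e)}$ by the usual triangle identity (\ref{diaadj}): $\eta_{\CE(-,e)}$ is an isomorphism since the Yoneda embedding composed with $T_A$ and then $H_A$ recovers the representable, so $H_A(\delta_{A(e)})$ is an isomorphism; but more simply, $\CE(-,e)\in\Ima H_A$ and $\delta$ is split by $T_A(\eta)$, and full faithfulness forces the splitting to be two-sided. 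So $A(e)\in\CS_A$. (Alternatively: representable functors are dense, so the identity equivalence $\CE\rightleftarrows\CE$ is represented by $A$ in the sense of Theorem \ref{repres} applied with $\CC=\CC'=\CE$, whence $\CE\subseteq\CC_A=\CS_A$; but the direct computation is cleaner.)

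For part b), the forward implication: if $X\in\CG_A$, then $\delta_X\colon(T_A\circ H_A)(X)\to X$ is an epimorphism by definition of $\CG_A$. Now $(T_A\circ H_A)(X)=T_A(H_A(X))$, and since $H_A(X)\cong\colim_{(e,y)\in\CE\downarrow H_A(X)}\CE(-,e)$ (density of $\CE$) and $T_A$ preserves colimits, $(T_A\circ H_A)(X)\cong\colim_{(e,y)}A(e)$, which is a quotient of the coproduct $\coprod_{(e,y)}A(e)$ — the canonical map from a coproduct onto a colimit over a connected-components-wise nonempty diagram is epic, or one simply precomposes with the canonical epimorphism from the coproduct of all objects in the diagram. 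Composing gives an epimorphism $A'\to X$ with $A'=\coprod A(e)$. For the converse, suppose $A'\to X$ is an epimorphism with $A'=\coprod_{i}A(e_i)$. Each $A(e_i)\in\CS_A\subseteq\Ima T_A\subseteq\CG_A$ by part a) and Lemma \ref{inclusions}; since $\CA$ is cocomplete and $T_A$ preserves coproducts, $A'=\coprod A(e_i)=\coprod T_A(\CE(-,e_i))=T_A(\coprod\CE(-,e_i))\in\Ima T_A\subseteq\CG_A$. Finally $\CG_A$ is closed under quotient objects by Lemma \ref{gclosepi}a), so $X\in\CG_A$, i.e. $X$ is $A$-generated.

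The main obstacle I anticipate is pinning down, in part a), that the natural isomorphism $(T_A\circ H_A)(A(e))\cong A(e)$ obtained from Yoneda and full faithfulness is genuinely the adjunction counit $\delta_{A(e)}$ rather than merely some isomorphism; this requires a careful triangle-identity argument, but it is exactly the dual of the kind of bookkeeping already carried out in Lemma \ref{repadj} (where the counit is shown to be an $H_A$-equivalence when $T$ is fully faithful). Once that is in hand, part b) is a routine application of density together with the colimit-preservation of $T_A$ and the closure properties from Lemmas \ref{gclosepi} and \ref{inclusions}, with no genuine difficulty — the only mild care needed is the observation that a canonical map from a coproduct of the vertices of a diagram onto its colimit is an epimorphism, for which cocompleteness suffices.
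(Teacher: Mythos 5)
Your proposal is correct and follows essentially the same route as the paper: part a) via the Yoneda/full-faithfulness identification $(T_A\circ H_A)(A(e))\cong T_A(\CE(-,e))\cong A(e)$, and part b) by combining the density formula $H_A(X)\cong\colim_{(e,x)\in A\downarrow X}\CE(-,e)$, colimit preservation of $T_A$, the epimorphism $\delta_X$ for $X\in\CG_A$, and the closure properties $\Ima T_A\subseteq\CG_A$ and quotient-closedness from Lemma \ref{gclosepi}. Your extra care in checking that the composite isomorphism really is the counit $\delta_{A(e)}$ (via $\eta_{\CE(-,e)}$ being an isomorphism and the triangle identity) is a legitimate point the paper elides, but it does not change the argument.
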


\begin{proof} a) Since $A$ is fully faithful, we have the natural
isomorphisms: \[(T_A\circ H_A)(A(e))=T_A(\CA(A(-),A(e))\cong
T_A(\CE(-,e))\cong A(e),\] for every $e\in\CE$.

b) Let $A'=\coprod A(e_i)\in\CA$ be a coproduct of objects of the
form $A(e)$. By the result in a) we deduce \[A'=\coprod
A(e_i)\cong\coprod(T_A\circ H_A)(A(e_i))\cong T_A(\coprod
H_A(A(e_i)))\in\Ima T_A,\] so $A'\in\CG_A$, since $\Ima
T_A\subseteq \CG_A$, inclusion established in Lemma
\ref{gclosepi}. If $X\in\CA$ such that there is an epimorphism
$A'\to X$, then $X\in\CG_A$, again by Lemma \ref{gclosepi}.
Conversely, for every $X\in\CA$, the object $H_A(X)$ of
$[\CE\opp,\Set]$ may be written as
\[H_A(X)\cong\colim_{(e,x)\in\CE\downarrow H_A(X)}\CE(-,e)=\colim_{(e,x)\in A\downarrow X}\CE(-,e),\]
where the comma category $A\downarrow X$ has as objects pairs of
the form $(e,x)$ with $e\in\CE$ and $x\in\CA(A(e),X)$. Thus
\[(T_A\circ H_A)(X)\cong \colim_{(e,x)\in A\downarrow
X}T_A(\CE(-,e))\cong\colim_{(e,x)\in A\downarrow X}A(e),\] so
there is an epimorphism from $\coprod_{(e,x)\in A\downarrow
X}A(e)$ to $(T_A\circ H_A)(X)$. Further the morphism
$\delta_X:(T_A\circ H_A)(X)\to X$ is an epimorphism too, for
$X\in\CG_A$. Composing them we obtain the desired epimorphism.
\end{proof}

Recall that a category $\CA$ is called {\em well (co)powered} if
for every object the class of subobjects (respectively quotient
objects) is actually a set.

\begin{theorem}\label{ctoa} If $\CA$ is a cocomplete,
well copowered, balanced category with epimorphic images, and
$A:\CE\to\CA$ is a fully faithful functor, then the inclusion
functor $\CC_A\to\CA$ has a right adjoint, or equivalently, every
object in $\CA$ has a left $\CC_A$-approximation.
\end{theorem}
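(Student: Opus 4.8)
The natural route is through Lemma \ref{fractions}: it suffices to exhibit, for each $X\in\CA$, an $H_A$-equivalence $g\colon X_A\to X$ with $X_A\in\CC_A$; the assignment $X\mapsto X_A$ will then be the desired right adjoint of $\CC_A\to\CA$, and the reformulation with left $\CC_A$-approximations is part of the same Lemma. Fix $X$ and build by transfinite recursion a chain of objects $Z_\alpha$ over $X$, with structure maps $g_\alpha\colon Z_\alpha\to X$ and transition maps $q_\alpha\colon Z_\alpha\to Z_{\alpha+1}$. Put $Z_0=(T_A\circ H_A)(X)$ and $g_0=\delta_X$. Given $(Z_\alpha,g_\alpha)$, form in the complete category $[\CE\opp,\Set]$ the kernel pair $p_1,p_2\colon K_\alpha\rightrightarrows H_A(Z_\alpha)$ of $H_A(g_\alpha)$, apply $T_A$, and let $q_\alpha\colon Z_\alpha\to Z_{\alpha+1}$ be the coequalizer in $\CA$ of the pair $\delta_{Z_\alpha}T_A(p_1),\ \delta_{Z_\alpha}T_A(p_2)\colon T_A(K_\alpha)\rightrightarrows Z_\alpha$; at limit ordinals set $Z_\lambda=\colim_{\alpha<\lambda}Z_\alpha$. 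A routine adjunction computation (take adjuncts, use the triangle identities and $H_A(g_\alpha)p_1=H_A(g_\alpha)p_2$) shows first that the adjunct of $\delta_{Z_\alpha}T_A(p_i)$ is simply $p_i$, hence that $g_\alpha$ coequalizes the displayed pair; this yields the factorization defining $g_{\alpha+1}$, and $g_\lambda$ is the map induced out of the colimit.

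Two properties are preserved along the chain. On the one hand $Z_0=(T_A\circ H_A)(X)\in\Ima T_A\subseteq\CC_A$ by Lemma \ref{inclusions}, each $T_A(K_\alpha)\in\Ima T_A\subseteq\CC_A$ as well, and $\CC_A$ is closed under colimits by Lemma \ref{2out3}; so every $Z_\alpha$ lies in $\CC_A$. On the other hand $H_A(g_0)=H_A(\delta_X)$ is a split epimorphism by the triangle identity, and since every $g_\alpha$ is a left factor of $g_0$, every $H_A(g_\alpha)$ is a split epimorphism; also each $q_\alpha$ is an epimorphism (a coequalizer) and, by a short argument with the colimit's universal property, so is the canonical map $Z_0\to Z_\lambda$ at limit stages. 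Hence each $Z_\alpha$ is a quotient object of the single object $Z_0$. As $\CA$ is well copowered these quotient objects form a set, so the non-increasing chain of quotients $(Z_0\twoheadrightarrow Z_\alpha)_\alpha$ stabilizes: for some $\beta$ the map $q_\beta$ is an isomorphism.

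When $q_\beta$ is an isomorphism the coequalized pair at stage $\beta$ is equal, and taking adjuncts this reads $p_1=p_2\colon K_\beta\rightrightarrows H_A(Z_\beta)$; thus the kernel pair of $H_A(g_\beta)$ is trivial and $H_A(g_\beta)$ is a monomorphism. Being at the same time a split epimorphism it is an isomorphism, i.e. $g_\beta\colon Z_\beta\to X$ is an $H_A$-equivalence with $Z_\beta\in\CC_A$, and Lemma \ref{fractions} concludes. (One can alternatively first replace $X$ by $\ima\delta_X$, which by Corollary \ref{csubsetg} does not change the relevant data, and run the construction there, thereby also using that $\CA$ is balanced with epimorphic images.) The delicate points, and the reason the target category $[\CE\opp,\Set]$ is brought into play, are exactly the bookkeeping that makes each $g_\alpha$ well defined and the endgame: it is in the presheaf category --- complete and balanced --- that ``trivial kernel pair together with split epimorphism'' yields an isomorphism, so that the stabilization forced by well-copoweredness of $\CA$ can be cashed in as an $H_A$-equivalence. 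One must also check that the chain does not escape $\CC_A$ nor lose the ``quotient of $Z_0$'' property at limit ordinals, which is where cocompleteness of $\CA$ enters.
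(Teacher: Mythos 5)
Your proof is correct, but it takes a genuinely different route from the paper's. The paper combines Lemma \ref{smallg} and Corollary \ref{csubsetg} to see that $\{A(e)\mid e\in\CE\}$ is a generating set for $\CC_A$, observes that $\CC_A$ is cocomplete, well copowered, and that its inclusion into $\CA$ preserves colimits, and then simply invokes Freyd's Special Adjoint Functor Theorem; this is where full faithfulness of $A$, balancedness and epimorphic images actually enter. You instead run an explicit transfinite tower (a small-object/cellularization argument): starting from $\delta_X$ you repeatedly coequalize the $T_A$-image of the kernel pair of $H_A(g_\alpha)$, stay inside $\CC_A$ because $\Ima T_A\subseteq\CC_A$ (Lemma \ref{inclusions}) and $\CC_A$ is closed under colimits (Lemma \ref{2out3}), and use well-copoweredness to force the descending chain of quotients of $Z_0$ to stabilize, at which point $H_A(g_\beta)$ is a split epimorphism with trivial kernel pair, hence an isomorphism, and Lemma \ref{fractions} finishes. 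I checked the delicate steps --- the adjunct of $\delta_{Z_\alpha}T_A(p_i)$ being $p_i$, the persistence of ``epimorphic image of $Z_0$'' at limit stages, and the stabilization (if $q_\alpha$ is not invertible then, since a split monomorphism which is an epimorphism is invertible, all subsequent quotient objects of $Z_0$ differ from the $\alpha$-th one) --- and they all go through. What your approach buys is an explicit description of the colocalization and, notably, independence from the full faithfulness of $A$; it also does not really need balancedness of the presheaf category in the endgame, since a monomorphism that is a split epimorphism is an isomorphism in any category. What it costs is length and the transfinite bookkeeping that the Special Adjoint Functor Theorem packages away; both proofs use well-copoweredness in an essential way.
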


\begin{proof} Combining Lemma \ref{smallg} and
Corollary \ref{csubsetg}, we deduce that every object in $\CC$ is
a quotient object of a direct sum of objects of the form $A(e)$,
so $\{A(e)\mid e\in\CE\}\subseteq\CC_A$ is a generating set for
$\CC_A$, by \cite[Chapter II, Proposition 15.2]{BM}. The closure
of $\CC_A$ under colimits implies that the inclusion functor
$\CC_A\to\CA$ preserves colimits, and the category $\CC_A$
inherits from $\CA$ the property to be well copowered. Thus the
conclusion follows by Freyd's Special Adjoint Functor Theorem (see
\cite[Chapter V, Corollary 3.2]{BM}).
\end{proof}

If $\CE$ has a single object and $A$ ia a fully faithful functor
(i.e. $A$ is an object of $\CA$), then $A$-colocal objects are
sometimes called {\em$A$-cellular}, and an $H_A$-equivalence is
called then simply an {\em$A$-equivalence}. Our Theorem \ref{ctoa}
shows that, under reasonable hypotheses (that means $\CA$ is a
cocomplete, well copowered, balanced category with epimorphic
images), every object $X$ has an {\em$A$-cellular approximation},
what means an $A$-equivalence $C\to X$ with $C$ being
$A$-cellular. Hence it is generalized in this way \cite[Section
2.C]{FGS}, where is constructed an $A$-cellular approximation for
every group.

The same proof that given \cite[Lemma 2.6]{FGS} for the case of
the category of groups works for the following consequence of the
existence of an $A$-cellular approximation for every object
$X\in\CA$:

\begin{corollary}\label{initer}
Let $A:\CE\to\CA$ be a fully faithful functor, where $\CA$ is a
cocomplete, well copowered, balanced category with epimorphic
images and $\CE$ is small category. The following are equivalent
for a morphism $\alpha:C\to X$ in $\CA^\to$: \begin{itemize}
\item[{\rm (i)}] $\alpha$ is a left $\CC_A$-approximation of $X$.
\item[{\rm (ii)}] $\alpha$ is an $H_A$-equivalence and it is
initial among all $H_A$-equivalences ending in $X$ \item[{\rm
(iii)}] $C\in\CC_A$ and $\alpha$ is terminal among all morphisms
from an $A$-colocal object to $X$.\end{itemize}
\end{corollary}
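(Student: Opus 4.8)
The plan is to prove the cycle of implications (i)$\Rightarrow$(ii)$\Rightarrow$(iii)$\Rightarrow$(i), using freely that by Theorem \ref{ctoa} every object $X\in\CA$ admits a left $\CC_A$-approximation, i.e. an $H_A$-equivalence $c_X:X_A\to X$ with $X_A\in\CC_A$, and that this is the counit of the adjunction between the inclusion $\CC_A\to\CA$ and its right adjoint. I would first record the elementary observation that a left $\CC_A$-approximation of a fixed $X$ is unique up to isomorphism over $X$, and that composing with it gives, for any $C\in\CC_A$, a bijection $\CA(C,X_A)\cong\CA(C,X)$.

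For (i)$\Rightarrow$(ii): given a left $\CC_A$-approximation $\alpha:C\to X$, I must show it is an $H_A$-equivalence. Since the counit $c_X:X_A\to X$ is an $H_A$-equivalence with $X_A\in\CC_A$, the approximation property factors $c_X$ through $\alpha$ and $\alpha$ through $c_X$; the resulting endomorphisms of $C$ and $X_A$ are identities by the universal property, so $\alpha$ is a retract-equivalent of $c_X$, hence an $H_A$-equivalence by Lemma \ref{2out3}(a),(b). To see $\alpha$ is \emph{initial} among $H_A$-equivalences $\epsilon:C'\to X$ ending in $X$: since $C\in\CC_A$ is $H_A$-colocal and $\epsilon$ is an $H_A$-equivalence, $\epsilon_*=\CA(\epsilon,C)$ is bijective, so the identity-like comparison factors uniquely; more precisely the map $\alpha:C\to X$ lifts uniquely along $\epsilon$ to a morphism $C\to C'$, and uniqueness follows from injectivity of $\epsilon_*$. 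That gives the unique factorization required by initiality.

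For (ii)$\Rightarrow$(iii): if $\alpha:C\to X$ is an $H_A$-equivalence initial among such, I first note $C\in\CC_A$. Apply the right adjoint (the cellular approximation) to get $c_C:C_A\to C$ an $H_A$-equivalence with $C_A\in\CC_A$; then $\alpha c_C:C_A\to X$ is an $H_A$-equivalence (Lemma \ref{2out3}(b)) ending in $X$, so by initiality of $\alpha$ there is a unique $f:C\to C_A$ with $\alpha(c_Cf)=\alpha$; initiality also forces $c_Cf=\id_C$ (compare the two factorizations of $\alpha$ through itself), and then $f c_C=\id_{C_A}$ follows because $C_A\in\CC_A$ is $H_A$-colocal and $c_C$ is an $H_A$-equivalence, so $(c_C)_*$ is injective. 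Hence $C\cong C_A\in\CC_A$. Now for terminality: given any $\beta:C'\to X$ with $C'\in\CC_A$, I must produce a unique $g:C'\to C$ with $\alpha g=\beta$. Existence: take the cellular approximation $c_{C'}:C'_A\to C'$ (an isomorphism since $C'\in\CC_A$, by the uniqueness noted above), and push $\beta$ down to get an $H_A$-equivalence into $X$ only after replacing $C'$ — the cleaner route is: since $C'\in\CC_A$ and $\alpha$ is an $H_A$-equivalence, $\alpha_*=\CA(C',\alpha):\CA(C',C)\to\CA(C',X)$ is bijective, so $\beta$ has a unique preimage $g$. That is exactly terminality.

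For (iii)$\Rightarrow$(i): suppose $C\in\CC_A$ and $\alpha:C\to X$ is terminal among morphisms from $A$-colocal objects to $X$. I must show $\alpha$ is a left $\CC_A$-approximation, i.e. every morphism $\beta:C'\to X$ with $C'\in\CC_A$ factors through $\alpha$ — but that is precisely the terminality hypothesis, so there is nothing to prove beyond unwinding definitions; the factorization need not be unique for the approximation property, so this implication is immediate. The main obstacle, and the place needing care, is the middle passage (ii)$\Leftrightarrow$(iii): turning ``initial among $H_A$-equivalences into $X$'' into ``$C$ is $A$-colocal'' requires genuinely using that $\CC_A$-objects are $H_A$-colocal (so that $(c_C)_*$ is injective, giving the splitting $f c_C=\id$), rather than just formal adjunction nonsense; everything else is a routine application of Lemma \ref{2out3} and the universal property furnished by Theorem \ref{ctoa}.
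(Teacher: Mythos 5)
Your three-step cycle is the standard argument and is, as far as one can tell, exactly what the paper intends by its appeal to \cite[Lemma 2.6]{FGS} (the paper supplies no proof of its own); every deduction you draw from colocality of $C$, $C'$, $C_A$ and from the initial/terminal universal properties is correct. However, the whole proof leans on one assertion that you state as if it were part of Theorem \ref{ctoa} but which that theorem does not actually give you: that the left $\CC_A$-approximation $c_X\colon X_A\to X$ is an $H_A$-equivalence. Theorem \ref{ctoa} is obtained from the Special Adjoint Functor Theorem and only produces a right adjoint $R$ to the inclusion $\CC_A\to\CA$, i.e.\ a couniversal arrow $\mu_X\colon R(X)\to X$ with $\CA(C',\mu_X)$ bijective for all $C'\in\CC_A$; it does not by itself say that $H_A(\mu_X)$ is invertible. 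You use this fact twice in an essential way: in (i)$\Rightarrow$(ii), to transfer membership in $\Eq(H_A)$ from $c_X$ to $\alpha$, and in (ii)$\Rightarrow$(iii), where $\alpha c_C$ must be an $H_A$-equivalence before the initiality of $\alpha$ can be invoked. The gap closes in one line, but the line must be written: by Lemma \ref{smallg}~a) and Lemma \ref{inclusions}~a) one has $A(e)\in\CS_A\subseteq\Ima T_A\subseteq\CC_A$ for every $e\in\CE$, so couniversality of $\mu_X$ makes $\CA(A(e),\mu_X)$ bijective for every $e$, which says precisely that $H_A(\mu_X)$ is an isomorphism. (The paper itself asserts this in the paragraph following Theorem \ref{ctoa}, equally without proof, so you are in good company, but a self-contained proof of the corollary must include it.)

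Two minor points. In (i)$\Rightarrow$(ii) your claim that the two composite endomorphisms are identities requires the factorization through a left $\CC_A$-approximation to be \emph{unique}, i.e.\ you must read ``left $\CC_A$-approximation'' as the counit of the coreflection (terminal object of $\CC_A\downarrow X$); that is the paper's reading, but it is then inconsistent to remark under (iii)$\Rightarrow$(i) that the factorization ``need not be unique'' --- with the weak, non-unique reading (i)$\Rightarrow$(ii) actually fails (for instance $X_A\sqcup X_A\to X$ is a weak approximation, since $\CC_A$ is closed under coproducts by Lemma \ref{2out3}~c), yet it is in general not an $H_A$-equivalence). Also, for a colocal object the relevant bijection is $\epsilon^*=\CA(C,\epsilon)$, not $\epsilon_*=\CA(\epsilon,C)$, in the paper's notation; this is only a slip and does not affect the argument.
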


Consequently we may use the following more of less tautological
formulas for determining the left $\CC_A$-approximation of an
object (see \cite[Sections 7.1 and 7.2]{DL}):

\begin{corollary}\label{bousfield}
Let $A:\CE\to\CA$ be a fully faithful functor, where $\CA$ is a
cocomplete, well copowered, balanced category with epimorphic
images and $\CE$ is small category. If $\alpha:C\to X$ is the left
$\CC_A$-approximation $X$, then it holds:

\begin{itemize} \item[{\rm a)}]
$C=\displaystyle{\lim_{X'\to X}}X'$, where $X'\to X$ runs over all
$H_A$-equivalences. \item[{\rm b)}] $C=\displaystyle{\colim_{X'\to
X}}X'$, where $X'$ runs over all $A$-colocal objects.
\end{itemize}
\end{corollary}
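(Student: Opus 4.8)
The plan is to deduce both formulas directly from Corollary~\ref{initer}, since the left $\CC_A$-approximation $\alpha\colon C\to X$ is characterized there both as the initial object among $H_A$-equivalences ending in $X$ and as the terminal object among morphisms from an $A$-colocal object to $X$. The statement to be proved is essentially that an initial (respectively terminal) object of a comma-type category, when it exists, can be computed as the limit (respectively colimit) of the relevant diagram; so the work is in identifying the diagrams correctly and checking that the universal cone has the required source.

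For part a), I would consider the category $\mathcal E_X$ whose objects are the $H_A$-equivalences $X'\to X$ and whose morphisms are commuting triangles over $X$. The forgetful functor $\mathcal E_X\to\CA$, $(X'\to X)\mapsto X'$, has a diagram $\lim_{X'\to X}X'$ in $\CA$ since $\CA$ is complete (it is cocomplete and well copowered, and in fact the standing hypotheses of Theorem~\ref{ctoa} guarantee enough completeness; alternatively one passes to a small cofinal subcategory using well copoweredness so the limit exists). The canonical maps assemble into a morphism $L=\lim_{X'\to X}X'\to X$, and I would argue: first, $\alpha\colon C\to X$ is itself an object of $\mathcal E_X$, so there is a map $L\to C$ over $X$; second, by the \emph{initiality} of $\alpha$ in $\mathcal E_X$ established in Corollary~\ref{initer}(ii), there is a map $C\to L$ over $X$, provided $L\to X$ is again an $H_A$-equivalence. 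That last point is the crux: I need $\Eq(H_A)$ to be closed under the relevant limit. Here I invoke Lemma~\ref{2out3}: the projection $L\to X'$ composed with $X'\to X$ equals $L\to X$, and since $X'\to X\in\Eq(H_A)$, by two-out-of-three it suffices that $L\to X'$ is an $H_A$-equivalence for one (equivalently, using a terminal-type argument on the diagram, that the limit cone is built from equivalences). A cleaner route: the diagram $\mathcal E_X$ has $\alpha\colon C\to X$ as an object, and by Corollary~\ref{initer}(ii) it is initial, hence it \emph{is} the limit of any diagram it maps initially into — more precisely, an initial object of $\mathcal E_X$ is preserved by the forgetful functor up to the canonical comparison because the cone over the whole diagram with vertex $C$ (using initiality to get $C\to X'$ for each $X'$) is a limit cone, its universality being exactly the initiality of $C$. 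So $C\cong\lim_{X'\to X}X'$.

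For part b), the argument is formally dual but uses the \emph{terminal} characterization in Corollary~\ref{initer}(iii): let $\mathcal D_X$ be the category of morphisms $C'\to X$ with $C'\in\CC_A$, forgetful functor sending this to $C'$. Since $\CC_A$ is closed under colimits (Lemma~\ref{2out3}c)) and $\CA$ is cocomplete, $\colim_{X'\to X}X'$ over all $A$-colocal $X'$ lives in $\CC_A$ and comes with a canonical map to $X$. Terminality of $\alpha\colon C\to X$ among such morphisms gives a map $\operatorname{colim}C'\to C$ over $X$; conversely $C$ itself is one of the $C'$, giving $C\to\operatorname{colim}C'$; and the composite endomorphisms are identities because the colimit cocone with vertex $C$, obtained from the terminal maps $C'\to C$, is a colimit cocone whose universality is precisely the terminality of $C$ in $\mathcal D_X$. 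Hence $C\cong\colim_{X'\to X}X'$. I expect the \textbf{main obstacle} to be the set-theoretic point that neither diagram is a priori small: in part a) one must cut down to a set of $H_A$-equivalences ending in $X$ (possible because $\CA$ is well copowered, so up to isomorphism over $X$ there is only a set of such sources realized as subquotients, or one replaces the diagram by the cofinal one coming from Corollary~\ref{bousfield}'s own existence hypothesis), and in part b) one similarly restricts to a generating set of $A$-colocal objects as in the proof of Theorem~\ref{ctoa}; once smallness is arranged, the existence of the (co)limit and the formal initial/terminal-object argument finish the proof with essentially no computation.
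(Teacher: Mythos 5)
The paper gives no explicit proof of this corollary, presenting it as a ``more or less tautological'' consequence of Corollary~\ref{initer} (with a pointer to Dwyer's notes), and your argument is exactly the intended deduction: the initial (resp.\ terminal) object of the category of $H_A$-equivalences ending in $X$ (resp.\ of morphisms from $A$-colocal objects to $X$) is carried by the forgetful functor to a limit (resp.\ colimit) of the corresponding diagram, the universal property being verified via the component of any competing cone at $C$ itself. Your ``cleaner route'' is the correct and complete version of this standard fact, so the proposal is correct and matches the paper's approach.
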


\section{$*$-acts over monoids}\label{nonadd}

We see a monoid $M$ as a category with one object whose
endomorphism set is $M$. Thus we consider the category
$[M\opp,\Set]$ of all contravariant functors from this category to
the category of sets, and we call it the category of {\em(right)
acts} over $M$, or simply {\em$M$-acts}. Clearly an $M$-act is a
set $X$ together with a an action $X\times M\to X$, $(x,m)\mapsto
xm$ such that $(xm)m'=x(mm')$ and $x1=x$ for all $x\in X$ and all
$m,m'\in M$. Left acts are covariant functors $M\to\Set$, that is
sets $X$ together with an action $M\times X\to X$, satisfying the
corresponding axioms. For the general theory of acts over monoids
and undefined notions concerning this subject we refer to
\cite{KKM}. We should mention here that in contrast with
\cite{KKM} we allow the empty act to be an object in our category
of acts, for the sake of (co)completness. Note that the category
of $M$-acts is balanced and has epimorphic images, by
\cite[Proposition 1.6.15 and Theorem 1.4.21]{KKM}.

Fix a monoid $M$ and an object $A\in[M\opp,\Set]$. In order to use
the results of the preceding Sections, we identify $A$ with a
fully faithful functor $E\to[M\opp,\Set]$ where $E$ is the
endomorphism monoid of $A$. Thus $A$ is canonically a $E-M$-biact
(see \cite[Definition 1.4.24]{KKM}), so we obtain two functors
\[H_A:[M\opp,\Set]\to[E\opp,\Set],\ H_A(X)=[A,X]\] and
\[T_A:[E\opp,\Set]\to[M\opp,\Set],\ T_A(Y)=Y\otimes_EA\]
the second one being the left adjoint of the first (see
\cite[Definition 2.5.1 and Proposition 2.5.19]{KKM}). Clearly
these functors agree with the functors defined at the beginning of
the Section \ref{repeq}.

We say that $A$ is a {\em (weak) $*$-act} if the above adjoint
pair induces mutually inverse equivalences
$H_A:\CG_A\rightleftarrows\CG^A:T_A$ (respective
$H_A:\CC_A\rightleftarrows\CG^A:T_A$). Note that our definitions
for subcategories $\CG_A$ and $\CG^A$ agree with the
characterizations of all $A$-generated respectively
$A^*$-cogenerated modules given in \cite[Lemma 2.1.2]{CFB}. As we
may see from Proposition \ref{wstarob}, our subcategory $\CC_A$
seems to be the non--additive counterpart of the subcategory of
all $A$-presented modules (compare with \cite[Proposition
2.2.4]{CFB}).

In what follows, we need more definitions relative to an $M$-act
$A$. First $A$ is called {\em decomposable} if there exists two
non empty subacts $B,C\subseteq A$ such that $A=B\cup C$ and
$B\cap C=\emptyset$ (see \cite[Definition 1.5.7]{KKM}). In this
case $A=B\sqcup C$, since coproducts in the category of acts is
the disjoint union, by \cite[Proposition 2.1.8]{KKM}. If $A$ is
not decomposable, then it is called {\em indecomposable}. Second,
$A$ is say to be {\em weak self--projective} provided that
$(H_A\circ T_A)(g)$ is an epimorphism whenever $g:U\to Y$ is an
epimorphism in $[E\opp,\Set]$ with $U\in\CS^A$. More explicitly,
if $g:U\to Y$ is an epimorphism in $[E\opp,\Set]$, then  $T_A(g)$
is an epimorphism in $[M\opp,\Set]$ and our definition requires
that $A$ is projective relative to such epimorphisms for which
$U\in\CS^A$. Third $A$ is called {\em (self--)small} provided that
the functor $H_A$ preserves coproducts (of copies of $A$).

\begin{lemma}\label{indec} With  the notations above, the following are equivalent:
\begin{itemize}
\item[{\rm (i)}] $A$ is small. \item[{\rm (ii)}] $A$ is
self--small. \item[{\rm (iii)}] $E^{(I)}$ is $\eta$-reflexive for
any set $I$, where $E^{(I)}$ denotes the coproduct indexed over
$I$ of copies of $E$. \item[{\rm (iv)}] $E\sqcup E$ is
$\eta$-reflexive. \item[{\rm (v)}] $A$ is indecomposable.
\end{itemize}
\end{lemma}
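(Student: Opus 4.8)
The plan is to establish the cycle of implications $(v)\Rightarrow(i)\Rightarrow(ii)\Rightarrow(iv)$, then handle $(iii)\Leftrightarrow(iv)$ separately (with $(iii)\Rightarrow(iv)$ trivial), and finally close the loop with $(iv)\Rightarrow(v)$ by contraposition. The key observation to keep in mind throughout is that, since $A$ is a right $M$-act and simultaneously a left $E$-act (where $E=\End_M(A)$), the functor $H_A$ always preserves products, so the only obstruction to preserving coproducts is whether the canonical map $\coprod_I H_A(A(e))\to H_A(\coprod_I A(e))$ — equivalently, whether the canonical comparison morphism $E^{(I)}\to H_A(T_A(E^{(I)}))$, namely $\eta_{E^{(I)}}$ — is bijective. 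Indeed $T_A(E^{(I)})=E^{(I)}\otimes_E A\cong A^{(I)}$, so $(H_A\circ T_A)(E^{(I)})\cong H_A(A^{(I)})=[A,A^{(I)}]$, and $\eta_{E^{(I)}}$ is exactly the canonical map $E^{(I)}=[A,A]^{(I)}\to[A,A^{(I)}]$. This pins down that $A$ being self--small (that $H_A$ commutes with coproducts of copies of $A$) is literally the same as saying every $E^{(I)}$ is $\eta$-reflexive, which gives $(ii)\Leftrightarrow(iii)$ directly; and $(i)\Rightarrow(ii)$ is immediate from the definitions.

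For $(ii)\Rightarrow(iv)$ note $E\sqcup E=E^{(2)}$ is a special case of $(iii)$; and $(iii)\Rightarrow(iv)$ is a special case. The substantive content lies in $(iv)\Rightarrow(v)$ and $(v)\Rightarrow(i)$. For $(iv)\Rightarrow(v)$ I argue by contraposition: suppose $A$ is decomposable, say $A=B\sqcup C$ with $B,C$ nonempty subacts. Then $A\cong B\sqcup C$ in $[M\opp,\Set]$, and I want to produce an element of $[A,A\sqcup A]$ not in the image of $E\sqcup E=[A,A]\sqcup[A,A]\to[A,A\sqcup A]$. The point is that the two copies of $A$ in the coproduct $A\sqcup A$ split as $B_1\sqcup C_1\sqcup B_2\sqcup C_2$; a homomorphism from $A=B\sqcup C$ that sends $B$ into the first copy and $C$ into the second copy is a well-defined $M$-map (each piece is a subact, so there is no interaction), but it is not in the image of either coproduct inclusion $[A,A]\to[A,A\sqcup A]$, since those inclusions have image consisting of maps that factor entirely through one of the two copies of $A$. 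Hence $\eta_{A\sqcup A}$ fails to be surjective, so $E\sqcup E=A\sqcup A$ (as $E$-acts, up to the identification) is not $\eta$-reflexive. This is the step I expect to be the main obstacle: one must be careful that "not $\eta$-reflexive" really follows, i.e.\ that $\eta_{E\sqcup E}$ is the map just described and that the constructed homomorphism genuinely lies outside its image; the disjointness of $B$ and $C$ is what makes the piecewise map a morphism, and this is exactly where the hypothesis "decomposable" is used.

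Finally, for $(v)\Rightarrow(i)$ I must show that if $A$ is indecomposable then $H_A$ preserves arbitrary coproducts, i.e.\ for any family $(X_i)_{i\in I}$ in $[M\opp,\Set]$ the canonical map $\coprod_i[A,X_i]\to[A,\coprod_i X_i]$ is bijective. Since coproducts in $[M\opp,\Set]$ are disjoint unions of the underlying sets with componentwise $M$-action (by \cite[Proposition 2.1.8]{KKM}), a homomorphism $f:A\to\coprod_i X_i$ has, for each element $a\in A$, a well-defined index $\iota(a)\in I$ recording which component $f(a)$ lands in; because $f$ is an $M$-map, $\iota(am)=\iota(a)$ for all $m$, so $\iota$ is constant on each cyclic subact $aM$. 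The preimages $A_i=\iota^{-1}(i)$ are therefore subacts partitioning $A$ into a disjoint union $A=\bigsqcup_i A_i$. Indecomposability of $A$ forces all but one of the $A_i$ to be empty (a standard fact: an act with no nontrivial decomposition admits no nontrivial partition into subacts — or, iterating the binary definition), so $f$ factors through a single $X_{i_0}\hookrightarrow\coprod_i X_i$, which says precisely that $f$ comes from $\coprod_i[A,X_i]$; injectivity of the canonical map is clear. This establishes $(i)$ and closes the cycle. Collecting the implications $(i)\Rightarrow(ii)\Leftrightarrow(iii)\Rightarrow(iv)\Rightarrow(v)\Rightarrow(i)$ yields the equivalence of all five conditions.
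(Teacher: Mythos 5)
Your proof is correct and follows essentially the same route as the paper's: (i)$\Rightarrow$(ii) trivially, (ii)$\Rightarrow$(iii) by identifying $\eta_{E^{(I)}}$ with the canonical comparison map $[A,A]^{(I)}\to[A,A^{(I)}]$, (iii)$\Rightarrow$(iv) as a special case, (iv)$\Rightarrow$(v) by contraposition via the piecewise map $B\sqcup C\to A\sqcup A$ that lands in $[A,A\sqcup A]$ but not in the image of $E\sqcup E$, and (v)$\Rightarrow$(i). The only substantive difference is that where you prove (v)$\Rightarrow$(i) explicitly by the partition-into-subacts argument, the paper simply cites \cite[Lemma 1.5.37]{KKM}.
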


\begin{proof} (i)$\Rightarrow$(ii) is obvious.

(ii)$\Rightarrow$(iii). If $H_A$ commutes with coproducts of
copies of $A$ then
\begin{align*} E^{(I)}&=\coprod_I[A,A]\cong\left[A,\coprod_IA\right]
\cong\left[A,\coprod_I(E\otimes_EA)\right]\\
&\cong\left[A,\left(E^{(I)}\right)\otimes_EA\right]\cong(H_A\circ
T_A)\left(E^{(I)}\right).\end{align*}

(iii)$\Rightarrow$(iv) is obvious.

(iii)$\Rightarrow$(iv). If $A$ is decomposable, that is $A=B\sqcup
C$ with $B\neq\emptyset$ and $C\neq\emptyset$, then let $i_B:B\to
A$ and $i_C:C\to A$ the canonical injections of this coproduct.
Denote also by $j_1,j_2:A\to A\sqcup A$ the corresponding
canonical injections. The homomorphisms of $M$-acts $j_1i_B:B\to
A\sqcup A$ and $j_2i_C:C\to A\sqcup A$ induce a unique
homomorphism $f:A=B\sqcup C\to A\sqcup A$. Obviously
$f\in(H_A\circ T_A)(E\sqcup E)$ but
$f\notin[A,A]\sqcup[A,A]=E\sqcup E$.

(iv)$\Rightarrow$(i) is \cite[Lemma 1.5.37]{KKM}.

\end{proof}

\begin{proposition}\label{wstaract}
The following statements hold:
\begin{itemize}
 \item[{\rm a)}] If $A$ is a weak $*$-act then $A$ is weak self--projective.
 \item[{\rm b)}] If $A$ is weak self--projective and indecomposable, then $A$ is a weak $*$-act.
\end{itemize}
\end{proposition}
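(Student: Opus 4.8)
The plan is to read both parts through the formal machinery of Section \ref{eiaf}, specialized to the adjoint pair $T_A\dashv H_A$ on $\CA=[M\opp,\Set]$ and $\CB=[E\opp,\Set]$, which are balanced with epimorphic images. For part a), suppose $A$ is a weak $*$-act, i.e.\ $H_A$ and $T_A$ restrict to mutually inverse equivalences $\CC_A\rightleftarrows\CG^A$. By Proposition \ref{wstarob}, condition (i) there is exactly our hypothesis, so we get condition (ii): $\eta_Y:Y\to(H_A\circ T_A)(Y)$ is an epimorphism for every $Y\in\CB$. To deduce weak self--projectivity, let $g:U\to Y$ be an epimorphism in $[E\opp,\Set]$ with $U\in\CS^A$, so $\eta_U$ is an isomorphism. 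First I would form $T_A(g):T_A(U)\to T_A(Y)$, which is an epimorphism since $T_A$, being a left adjoint, preserves epimorphisms; then apply $H_A$, which preserves epimorphisms in a category with epimorphic images (alternatively, chase the naturality square for $\eta$ against $g$). The naturality of $\eta$ gives the commutative square $\eta_Y\circ g=(H_A\circ T_A)(g)\circ\eta_U$; since $\eta_U$ is an isomorphism and $\eta_Y\circ g$ is an epimorphism (a composite of the epimorphism $g$ with $\eta_Y$), it follows that $(H_A\circ T_A)(g)$ is an epimorphism, which is exactly the definition of weak self--projectivity.

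For part b), suppose $A$ is weak self--projective and indecomposable. By Proposition \ref{wstarob} again it suffices to show $\eta_Y:Y\to(H_A\circ T_A)(Y)$ is an epimorphism for all $Y\in[E\opp,\Set]$. The idea is to present $Y$ as a quotient of a ``free'' object that lies in $\CS^A$ and then transport surjectivity along that presentation. By Lemma \ref{indec}, indecomposability of $A$ is equivalent to smallness, which is equivalent to $E^{(I)}\in\CS^A$ for every set $I$. Now choose an epimorphism $g:U\to Y$ in $[E\opp,\Set]$ with $U$ a coproduct of copies of $E$ (such $g$ exists: the representable/free acts generate $[E\opp,\Set]$, so every object is covered by a coproduct of copies of $E=E(-,e)$); then $U\in\CS^A$ by the equivalence in Lemma \ref{indec}. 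Weak self--projectivity yields that $(H_A\circ T_A)(g)$ is an epimorphism. In the naturality square $\eta_Y\circ g=(H_A\circ T_A)(g)\circ\eta_U$, the object $U$ is $\eta$-reflexive so $\eta_U$ is an isomorphism, whence $(H_A\circ T_A)(g)\circ\eta_U$ is an epimorphism, and therefore $\eta_Y\circ g$ is an epimorphism. Since $g$ is an epimorphism, a standard cancellation (if $\beta\alpha$ is epi then $\beta$ is epi) forces $\eta_Y$ to be an epimorphism. As $Y$ was arbitrary, Proposition \ref{wstarob}(ii) holds, hence (i), i.e.\ $A$ is a weak $*$-act.

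The routine verifications are the two naturality-square chases and the ``free cover'' fact that every object of $[E\opp,\Set]$ is a quotient of a coproduct of copies of $E$; both are standard for functor/act categories. The one point that genuinely needs the hypothesis, and which I expect to be the crux, is the passage ``indecomposable $\Rightarrow$ the chosen covering object $U$ is $\eta$-reflexive'': without indecomposability one cannot guarantee $\eta_U$ is an isomorphism, and the argument collapses. This is precisely where Lemma \ref{indec} (the equivalences (v)$\Leftrightarrow$(i)$\Leftrightarrow$(iii)) does the work, linking the a priori unrelated notions of indecomposability, smallness, and $\eta$-reflexivity of the free objects $E^{(I)}$. I would be careful to invoke smallness in the form (iii) rather than (i), since it is (iii) — $E^{(I)}\in\CS^A$ — that is used verbatim.
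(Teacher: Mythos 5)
Your proposal is correct and follows essentially the same route as the paper: both parts are reduced to Proposition \ref{wstarob} via the naturality square $(H_A\circ T_A)(g)\circ\eta_U=\eta_Y\circ g$, with Lemma \ref{indec} supplying the $\eta$-reflexivity of the free cover $E^{(I)}$ in part b). The only blemish is the throwaway claim in part a) that $H_A$ ``preserves epimorphisms in a category with epimorphic images'' (a hom-functor need not preserve epimorphisms unless $A$ is projective), but your argument never actually uses it, since the naturality-square chase you give in its place is complete and is exactly the paper's proof.
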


\begin{proof} a) Let $A$ be a weak $*$-act and let $g:U\to Y$ be an epimorphism
in $[E\opp,\Set]$ with $U\in\CS^A$. We know by Proposition
\ref{wstarob} that $\eta_Y$ is epic, and by the naturalness of
$\eta$ that $(H_A\circ T_A)(g)\eta_U=\eta_Yg$. Since $\eta_U$ is
an isomorphism and $\eta_Yg$ is an epimorphism we deduce that
$(H_A\circ T_A)(g)$ is an epimorphism too.

b) As we have already noticed $H_A$ preserves coproducts, provided
that $A$ is indecomposable. Thus $\CS^A$ is closed under arbitrary
coproducts in the category of $E$-acts. For a fixed
$Y\in[E\opp,\Set]$ there is an epimorphism $g:E^{(I)}\to Y$. How
$E$ is $\eta$-reflexive the same is also true for $E^{(I)}$. But
$(H_A\circ T_A)(g)$ is an epimorphism, since $A$ is weak
self--projective. From the equality $(H_A\circ
T_A)(g)\eta_{E^{(I)}}=\eta_Yg$ follows that $\eta_Y$ is an
epimorphism too. The conclusion follows by Proposition
\ref{wstarob}.
\end{proof}

\begin{theorem}\label{staract}
The following statements hold:
\begin{itemize}
 \item[{\rm a)}] If $A$ is a $*$-act then $A$ is weak self--projective and $\CC_A=\CG_A$.
 \item[{\rm b)}] If $A$ is indecomposable, weak self--projective and $\CC_A=\CG_A$, then $A$ is a $*$-act.
\end{itemize}
\end{theorem}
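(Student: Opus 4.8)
The plan is to deduce Theorem \ref{staract} by combining Proposition \ref{wstaract} with Theorem \ref{starob} and its dual, exactly as Proposition \ref{wstarob} was combined with its dual to yield Theorem \ref{starob}. For part a), suppose $A$ is a $*$-act, so that $T_A$ and $H_A$ induce mutually inverse equivalences $\CG_A\rightleftarrows\CG^A$. By Theorem \ref{starob} this means $\delta_X:(T_A\circ H_A)(X)\to X$ is a monomorphism for all $X$ and $\eta_Y:Y\to(H_A\circ T_A)(Y)$ is an epimorphism for all $Y$. In particular, since $\eta_Y$ is epic for all $Y$, $A$ is a weak $*$-act by Proposition \ref{wstarob}, hence weak self--projective by Proposition \ref{wstaract} a). It remains to show $\CC_A=\CG_A$. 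Since $\eta_Y$ is epic, the dual of Proposition \ref{wstarob} (applied with the roles of $\CA$ and $\CB$ interchanged, i.e. using that $\delta_X$ is monic) gives $\CS^{\,}_{H_A}=\Ima T_A$, whence by Theorem \ref{sisc} $\CC_A=\CS_{H_A}$; combined with $\CS_{H_A}\subseteq\Ima T_A\subseteq\CG_A$ from Lemma \ref{inclusions} a) and the fact that the mono $\delta_X$ together with the balancedness of $[M\opp,\Set]$ forces every $X\in\CG_A$ to have $\delta_X$ an isomorphism, we obtain $\CG_A\subseteq\CS_{H_A}=\CC_A$, and the reverse inclusion $\CC_A\subseteq\CG_A$ is Corollary \ref{csubsetg}.

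For part b), suppose $A$ is indecomposable, weak self--projective, and $\CC_A=\CG_A$. By Proposition \ref{wstaract} b), indecomposability plus weak self--projectivity makes $A$ a weak $*$-act, so by Proposition \ref{wstarob} we already have mutually inverse equivalences $\CC_A\rightleftarrows\CG^A$; using $\CC_A=\CG_A$ this upgrades to $\CG_A\rightleftarrows\CG^A$, which is exactly the definition of a $*$-act. One should double-check that the equivalence functors are still the restrictions of $H_A$ and $T_A$ — but this is immediate since we have only enlarged (or rather, renamed) the source category without changing the functors.

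The main obstacle I anticipate is the verification in part a) that $\CC_A=\CG_A$, specifically pinning down why the monomorphism condition on $\delta_X$ forces $\delta_X$ to be an isomorphism for every $A$-generated $X$. The clean way is: for $X\in\CG_A$ the counit $\delta_X$ is by definition an epimorphism, and by hypothesis (via Theorem \ref{starob}) it is a monomorphism, so since $[M\opp,\Set]$ is balanced, $\delta_X$ is an isomorphism, i.e. $X\in\CS_{H_A}=\CC_A$ (the last equality by Theorem \ref{sisc}, whose hypothesis $\CS_{H_A}=\Ima T_A$ holds because the dual of Proposition \ref{wstarob}, condition (ii), is met by the monomorphism assumption on $\delta$). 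Conversely $\CC_A\subseteq\CG_A$ by Corollary \ref{csubsetg}, so $\CC_A=\CG_A$. Everything else is a direct invocation of the already-established propositions, so the argument is short; care is only needed to cite the correct direction (primal versus dual) of Proposition \ref{wstarob} at each step, since the asymmetry between the $\delta$-side and the $\eta$-side matters here.
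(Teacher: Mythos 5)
Your proposal is correct and follows essentially the same route as the paper, which disposes of both implications by reducing to Proposition \ref{wstaract} via Propositions \ref{wstarob} and Theorem \ref{starob} (the paper's own proof is a one-liner and you simply supply the details it leaves implicit, in particular the balancedness argument showing $\CG_A\subseteq\CS_{H_A}=\CC_A$ in part a)). The only blemish is a garbled citation: $\CS_{H_A}=\Ima T_A$ follows from the $\eta$-epi condition (the primal direction of the proof of Proposition \ref{wstarob}), while the $\delta$-mono condition dually gives $\CS^{T_A}=\Ima H_A$; since Theorem \ref{sisc} makes these equivalent, your conclusion $\CC_A=\CS_{H_A}$ stands either way.
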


\begin{proof} The both implications follow at once from Proposition \ref{wstaract}. \end{proof}

\begin{remark}
Propositions \ref{wstarob} and \ref{wstaract} and Theorems
\ref{starob} and \ref{staract} provide a non additive version of
\cite[Proposition 2.2.4]{CFB} respectively
\cite[Theorem2.3.8]{CFB}. In contrast with the case of modules,
where the functors are additive, for acts it is not clear that a
weak star object must me indecomposable (the non additive version
of self--smallness as we may seen from Lemma \ref{indec}). The
main obstacle for deducing this implication in the new setting
comes from the fact that non additive functors do not have to
preserves finite coproducts.
\end{remark}

Using the characterization of so called tilting modules given in
\cite[Theorem 2.4.5]{CFB}, we may define a {\em tilting} $M$-act
to be a $*$-act $A$ such that the injective envelope of $M$
belongs to $\CG_A$. Note that injective envelopes exist in
$[M\opp,\Set]$ by \cite[Corollary 3.1.23]{KKM}. As a subject for a
future research we may ask ourselves which from the many beautiful
results which are known for tilting modules do have correspondents
for acts.

Our next aim is to infer from our results the Morita--type
characterization of an equivalence between categories of acts (see
\cite[Section 5.3]{KKM}). In order to perform it we need a couple
of lemmas.

\begin{lemma}\label{genact}
If the $M$-act $A$ is a generator in $[M\opp,\Set]$ then
$\CC_A=\CG_A=[M\opp,\Set]$.
\end{lemma}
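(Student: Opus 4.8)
The plan is to show that when $A$ is a generator in $[M\opp,\Set]$, the adjunction arrows $\delta$ and $\eta$ are always isomorphisms, which immediately forces $\CS_H = \CS^T = \CA = \CB$, and hence $\CC_A = \CG_A = \CS_A = [M\opp,\Set]$ using Lemma \ref{inclusions} and the trivial inclusions. Concretely, recall that $A$ is identified with the inclusion of the one-object category $E$ (its endomorphism monoid) into $[M\opp,\Set]$, with $H_A = [A,-]$ and $T_A = -\otimes_E A$. The key fact to exploit is that $A$ being a generator means the representable functor $H_A = [A,-] \colon [M\opp,\Set] \to [E\opp,\Set]$ is \emph{faithful}; in fact, since the category of $M$-acts is balanced with epimorphic images and $A$ is a generator, every $M$-act $X$ admits an epimorphism $A^{(I)} \twoheadrightarrow X$ (this is essentially the content of \cite[Chapter II, Proposition 15.2]{BM} used already in the proof of Theorem \ref{ctoa}, or can be read off directly from the comma-category colimit formula $(T_A \circ H_A)(X) \cong \colim_{(e,x)\in A\downarrow X} A(e)$ in Lemma \ref{smallg}).

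First I would establish that $\delta_X \colon (T_A \circ H_A)(X) \to X$ is an epimorphism for every $X$ — this already holds with no generator hypothesis once $X \in \CG_A$, but for a generator one shows $\CG_A = [M\opp,\Set]$ directly: the colimit formula above exhibits $(T_A \circ H_A)(X)$ as a quotient of $\coprod_{(e,x)\in A\downarrow X} A(e)$, and since $A$ is a generator the evaluation map $\coprod_{x\colon A\to X} A \to X$ is epic, so $\delta_X$ is epic by the factorization, i.e. $X \in \CG_A$. This gives $\CG_A = [M\opp,\Set]$. Next I would show $\delta_X$ is in fact a monomorphism, hence (by balancedness) an isomorphism: here one uses that $H_A$ is faithful — indeed a generator detects the difference between morphisms, so if $\delta_X$ were not monic there would be two maps $u \neq v$ with $\delta_X u = \delta_X v$, but applying $H_A$ and using the triangle identity $H_A(\delta_X) \circ \eta_{H_A(X)} = 1_{H_A(X)}$ together with naturality would force $H_A(u) = H_A(v)$, contradicting faithfulness after pulling back along an epimorphism $A^{(I)} \twoheadrightarrow (T_A\circ H_A)(X)$. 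Thus $\delta_X$ is an isomorphism for all $X$, i.e. $\CS_H = \Ima T_A = \CA$.

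Having $\CS_H = \Ima T_A$, Theorem \ref{sisc} applies and gives $\CS^T = \CC^T$ and $\CS_H = \CC_H$, and since $\CS_H = \CA$ we get $\CC_A = [M\opp,\Set]$. Combined with $\CG_A = [M\opp,\Set]$ and the (always true) inclusion $\CC_A \subseteq \CG_A$ from Corollary \ref{csubsetg}, the chain of equalities $\CC_A = \CG_A = [M\opp,\Set]$ follows. (One also gets $\CS^A = \CG^A = \CC^A$ on the $E$-act side, but that is not needed for the stated conclusion.)

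The main obstacle I anticipate is the faithfulness/monomorphism step: one must be careful that "$A$ is a generator" is being used in the correct categorical sense (epimorphisms $A^{(I)} \to X$, equivalently $[A,-]$ faithful) and that balancedness plus epimorphic images of $[M\opp,\Set]$ — guaranteed by \cite[Proposition 1.6.15 and Theorem 1.4.21]{KKM} as noted in the text — genuinely licenses the passage "epi + mono $\Rightarrow$ iso". A cleaner alternative, avoiding the mono argument, is simply to invoke that a generator in a balanced cocomplete category with epimorphic images makes $T_A \dashv H_A$ a pair for which $T_A$ is faithful and $\delta$ is a split epi that is also monic; but the quickest route is to cite \cite[Chapter II, Proposition 15.2]{BM} to get every $M$-act as a quotient of a coproduct of copies of $A$, conclude $\CG_A = [M\opp,\Set]$, and then note that for a generator the counit of $T_A \dashv H_A$ restricted to $\Ima T_A$ being iso propagates to all of $[M\opp,\Set]$ since everything is $A$-generated — this is the step where I would slow down and write the details carefully.
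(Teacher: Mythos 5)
Your first step is fine and matches the paper: a generator admits an epimorphism from a coproduct of copies of $A$ onto every object, so Lemma \ref{smallg}(b) gives $\CG_A=[M\opp,\Set]$ immediately. The problem is the second half, where you claim $\delta_X$ is a monomorphism (hence, by balancedness, an isomorphism) for every $X$. The argument you sketch does not go through: from $\delta_X u=\delta_X v$ you only get $H_A(\delta_X)H_A(u)=H_A(\delta_X)H_A(v)$, and the triangle identity $H_A(\delta_X)\circ\eta_{H_A(X)}=1_{H_A(X)}$ only tells you that $H_A(\delta_X)$ is a \emph{split epimorphism}, which cannot be cancelled on the left. So you cannot conclude $H_A(u)=H_A(v)$, and faithfulness of $H_A$ (which a generator does give you) is being invoked in the wrong direction --- in effect you are assuming $H_A(\delta_X)$ is monic, which is essentially what you are trying to prove. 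The ``cleaner alternative'' in your last paragraph has the same gap: knowing that $\delta$ is an isomorphism on $\Ima T_A$ and that every object is a \emph{quotient} of something in $\Ima T_A$ does not propagate the isomorphism property, since $\CS_{H}$ is not closed under quotients. (The conclusion you are after is in fact true for acts --- one can check injectivity of $\delta_X$ by hand on elementary tensors using a retraction $M\to A\to M$ --- but that computation is exactly the missing content, and you have not supplied it.)

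The paper avoids this entirely by a shorter route: by \cite[Theorem 2.3.16]{KKM} a generator $A$ has $M$ as a retract; since $A=T_A(E)\in\Ima T_A\subseteq\CC_A$ and $\CC_A$ is closed under retracts (Lemma \ref{2out3}), it follows that $M\in\CC_A$. But $[M,-]$ is (naturally isomorphic to) the underlying-set functor, so every $H_A$-equivalence induces a bijection on underlying sets and is therefore an isomorphism of acts; consequently \emph{every} object is trivially $A$-colocal and $\CC_A=[M\opp,\Set]$. If you want to salvage your approach, either carry out the explicit injectivity computation for $\delta_X$ using the splitting $M\to A\to M$, or switch to the retract argument, which needs no analysis of $\delta$ at all beyond what Lemma \ref{inclusions} already provides.
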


\begin{proof}
For a generator $A$ of $[M\opp,\Set]$ the equality
$\CG_A=[M\opp,\Set]$ follows by Lemma \ref{smallg}. Moreover $M$
is a retract of $A$ by \cite[Theorem 2.3.16]{KKM}, therefore
$M\in\CC_A$, since $\CC_A$ is closed under retracts. Thus a
morphism $\epsilon:U\to V$ in $[M\opp,\Set]$ is an $A$-equivalence
if and only if it is an isomorphism, therefore
$\CC_A=[M\opp,\Set]$.
\end{proof}

Recall that the left $E$-act $A$ is said to be {\em pull back
flat} if the functor $T_A=(-\otimes_EA)$ commutes with pull backs
(see \cite[Definition 3.9.1]{KKM}).

\begin{lemma}\label{wprojact}
If the right $M$-act $A$ is indecomposable, weak self projective
and the left $E$-act $A$ is pull back flat, then
$\CG^A=[E\opp,\Set]$.
\end{lemma}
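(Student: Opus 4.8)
The plan is to show that $\eta_Y : Y \to (H_A \circ T_A)(Y)$ is a monomorphism for every $Y \in [E\opp,\Set]$; since $[E\opp,\Set]$ is balanced with epimorphic images, this gives $\CG^A = \CS^A = \CC^A$ by (the dual of) the argument used at the end of Proposition \ref{wstarob}, and in particular $\CG^A = [E\opp,\Set]$ once we know every $Y$ is $\eta$-reflexive — wait, more carefully: it suffices to prove $\eta_Y$ is a monomorphism for all $Y$, because $\CG^A$ is by definition exactly the class of such $Y$. So the whole task reduces to: every $Y \in [E\opp,\Set]$ has $\eta_Y$ monic.

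First I would exploit indecomposability. By Lemma \ref{indec}, $A$ indecomposable means $A$ is small, so $H_A$ preserves coproducts, and hence (as noted in the proof of Proposition \ref{wstaract}) $\CS^A$ is closed under arbitrary coproducts; in particular every free $E$-act $E^{(I)}$ lies in $\CS^A$, i.e. is $\eta$-reflexive. Next, for an arbitrary $Y$ I would present it by a coequalizer of free acts: choose epimorphisms so that there is an exact fork $E^{(J)} \rightrightarrows E^{(I)} \to Y$ exhibiting $Y$ as the coequalizer (equivalently, write $Y$ as a quotient of a free act and then resolve the kernel congruence by another free act). Alternatively, and perhaps more in keeping with the hypotheses at hand, I would realise $Y$ as a quotient $q : E^{(I)} \to Y$ and use the kernel pair $P \rightrightarrows E^{(I)}$ of $q$, so that $Y = \Coker(P \rightrightarrows E^{(I)})$ and $P$ itself is the pullback of $q$ along $q$.

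The pull-back flatness of the left $E$-act $A$ now enters: $T_A$ commutes with pullbacks, so $T_A$ carries the kernel pair $P \rightrightarrows E^{(I)}$ of $q$ to the kernel pair of $T_A(q)$ (using that $T_A(P)$ is still a pullback, and that $T_A(q)$ is an epimorphism since $T_A$ is a left adjoint). Since $A$ is weak self-projective and $E^{(I)} \in \CS^A$, the map $(H_A \circ T_A)(q)$ is an epimorphism; and because $H_A$ is a right adjoint it preserves the kernel-pair relation, so $(H_A \circ T_A)$ of the fork $P \rightrightarrows E^{(I)} \to Y$ computes the coequalizer of the kernel pair of $(H_A\circ T_A)(q)$ — which, that map being epic, is $(H_A \circ T_A)(Y)$ itself. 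In other words $(H_A\circ T_A)$ preserves this particular coequalizer. Now $\eta$ is a natural transformation from the identity to $H_A\circ T_A$; on $E^{(I)}$ (and on the free act presenting $P$, again a coproduct of copies of $E$, hence in $\CS^A$) it is an isomorphism, and by the universal property of the coequalizer these isomorphisms descend to force $\eta_Y$ to be an isomorphism — in particular a monomorphism — so $Y \in \CG^A$, completing the proof that $\CG^A = [E\opp,\Set]$.

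The main obstacle I anticipate is the bookkeeping around the kernel pair and its own presentation: I need the relation object $P$ (or the parallel pair presenting $Y$) to again be a coproduct of copies of $E$, or at least to lie in $\CS^A$, so that $\eta$ is already invertible there; in the category of acts the kernel pair of a map out of a free act is a subact of $E^{(I)} \times E^{(I)}$ and need not be free, so one must instead cover it by a free act and run the diagram chase on the full presentation $E^{(J)} \rightrightarrows E^{(I)} \to Y$, checking that $T_A$ still sends this reflexive-coequalizer fork to a coequalizer fork. Verifying that $T_A$ preserves the relevant coequalizer using only pull-back flatness (rather than outright right-exactness) — presumably via the standard fact that reflexive coequalizers can be built from pullbacks and the fact that $T_A$, being a left adjoint, preserves all coequalizers anyway, so that pull-back flatness is really only needed to identify the kernel pair of $T_A(q)$ — is the delicate point, and I would want to state it as a clean sublemma before assembling the argument.
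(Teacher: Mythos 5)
Your skeleton matches the paper's proof --- free cover $q\colon E^{(I)}\to Y$, kernel pair, pull-back flatness of $T_A$ plus pullback-preservation of $H_A$ to identify the kernel pair of $(H_A\circ T_A)(q)$, weak self-projectivity to make that map epic, and the fact that an epimorphism of acts is the coequalizer of its kernel pair --- but you stop exactly at the step that carries the whole argument: showing that $\eta$ is invertible on the kernel-pair object $K\subseteq E^{(I)}\times E^{(I)}$. The paper disposes of this immediately by first invoking Proposition \ref{wstaract}: since $A$ is indecomposable and weak self-projective, $A$ is a weak $*$-act, so $\CG^A=\CC^A=\CS^A$; this single class is then closed under subobjects (Lemma \ref{gclosepi}) and under limits (Lemma \ref{2out3}), hence $E^{(I)}\times E^{(I)}\in\CS^A$ and its subact $K$ lies in $\CS^A$, i.e.\ $\eta_K$ is already an isomorphism. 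Naturality of $\eta$ over the two coequalizer forks then gives $\eta_Y$ an isomorphism at once. You never use weak self-projectivity to conclude that $A$ is a weak $*$-act, so you have no access to these closure properties; that is the missing idea.

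Your proposed workaround --- covering $K$ by a free act $E^{(J)}$ and chasing the fork $E^{(J)}\rightrightarrows E^{(I)}\to Y$ --- can in fact be completed, but not by the route you gesture at: ``$H_A\circ T_A$ preserves this coequalizer'' is not free, since $H_A$ does not preserve coequalizers. What works is to apply weak self-projectivity a second time, to the epimorphism $E^{(J)}\to K$ (whose domain lies in $\CS^A$), so that $(H_A\circ T_A)(E^{(J)})\to(H_A\circ T_A)(K)$ is epic; since $(H_A\circ T_A)(K)\rightrightarrows(H_A\circ T_A)(E^{(I)})$ is the kernel pair of the epimorphism $(H_A\circ T_A)(q)$, the latter is also the coequalizer of the precomposed pair $(H_A\circ T_A)(E^{(J)})\rightrightarrows(H_A\circ T_A)(E^{(I)})$, and $\eta$ being invertible on $E^{(J)}$ and $E^{(I)}$ then forces $\eta_Y$ to be invertible. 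As written, however, you leave precisely this step as an unverified ``delicate point,'' so the proof is not complete.
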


\begin{proof} First observe that $A$ is a weak $*$-act by Proposition
\ref{wstaract}. Hence $\CG^A=\CC^A=\CS^A$, and this subcategory
has to be closed under subacts and limits. Moreover $E^{(I)}$ is
$\eta$-reflexive for any set $I$ according to Lemma \ref{indec}.
For a fixed $Y\in[E\opp,\Set]$ there is an epimorphism
$g:E^{(I)}\to Y$. Take the kernel pair of $g$, that is construct
the pull back
\[\diagram K\rto^{k_1}\dto_{k_2}&E^{(I)}\dto^{g}\\
            E^{(I)}\rto_{g}&Y\enddiagram.\]
The functors $T_A$ and $H_A$ preserve pull backs, the first one by
hypothesis and the second one automatically. Moreover $K$ is a
subact of $E^{(I)}\times E^{(I)}$ and the closure properties of
$\CS^A$ imply $K\cong(T_A\circ H_A)(K)$. Applying the functor
$H_A\circ T_A$ to the above diagram and having in the mind the
previous observations we obtain a
pull back diagram \[\diagram K\rrto^{k_1}\dto_{k_2}&&E^{(I)}=(H_A\circ T_A)\left(E^{(I)}\right)\dto^{(H_A\circ T_A)(g)}\\
            E^{(I)}=(H_A\circ T_A)\left(E^{(I)}\right)\rrto_{(H_A\circ T_A)(g)}&&(H_A\circ T_A)(Y)\enddiagram.\]
Note that $(H_A\circ T_A)(g)$ is an epimorphism by hypothesis.
Then we know by \cite[Theorem 2.2.44]{KKM} that both $g$ and
$(H_A\circ T_A)(g)$ are coequalizers for the pair $(k_1,k_2)$.
Thus we deduce $Y\cong(H_A\circ T_A)(Y)$ canonically, so
$Y\in\CS^A$. Thus $\CG^A=\CS^A=[E\opp,\Set]$.
\end{proof}

Now we are in position to prove the desired Morita--type result:

\begin{theorem}\label{Morita}
Let $M$ and $E$ be two monoids. Then the categories $[M\opp\Set]$
and $[E\opp,\Set]$ are equivalent via the mutually inverse
equivalence functors $H$ and $T$ if and only if there is a cyclic,
projective generator $A$ of $[M\opp,\Set]$ such that $E$ is the
endomorphism monoid of $A$, case in which $H=H_A$ and $T=T_A$.
\end{theorem}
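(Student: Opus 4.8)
The plan is to prove the two implications separately, using the machinery already developed. For the "if" direction, suppose $A$ is a cyclic projective generator of $[M\opp,\Set]$ with endomorphism monoid $E$. Since $A$ is a generator, Lemma \ref{genact} gives $\CC_A=\CG_A=[M\opp,\Set]$, so in particular $H_A$ and $T_A$ restrict to mutually inverse equivalences on the whole of $[M\opp,\Set]$ once we know $\CG^A=[E\opp,\Set]$ (by Theorem \ref{sisc} applied to the adjoint pair $T_A\dashv H_A$, or directly by Corollary \ref{tisff}). To obtain $\CG^A=[E\opp,\Set]$ I would invoke Lemma \ref{wprojact}: it remains to check that $A$ is indecomposable, weak self--projective, and pull back flat as a left $E$-act. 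Indecomposability follows from cyclicity (a cyclic act is generated by one element, hence cannot be written as a disjoint union of two nonempty subacts), which by Lemma \ref{indec} also makes $A$ small; weak self--projectivity follows from the projectivity of $A$ in $[M\opp,\Set]$, since $T_A(g)$ is then split epic and $H_A$ preserves epimorphisms of split form; and pull back flatness of ${}_EA$ should follow from $A$ being a projective $M$-act together with $E$ being its full endomorphism monoid, by the criteria in \cite[Chapter 3]{KKM} (projective acts are pull back flat). Granting these, Lemma \ref{wprojact} yields $\CG^A=[E\opp,\Set]$, and then $H_A,T_A$ are mutually inverse equivalences $[M\opp,\Set]\rightleftarrows[E\opp,\Set]$; uniqueness up to natural isomorphism of adjoints forces $H\cong H_A$ and $T\cong T_A$.

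For the "only if" direction, suppose $H:[M\opp,\Set]\rightleftarrows[E\opp,\Set]:T$ are mutually inverse equivalences with $T\dashv H$. Set $A=T(\CE(-,*))$ where $*$ is the unique object of the one-object category $E$; that is, $A=T(E)$ viewed in $[M\opp,\Set]$. By Theorem \ref{repres} (with $\CC=[M\opp,\Set]$, $\CC'=[E\opp,\Set]$, both trivially closed under the required limits/colimits and retracts, and $\CE(-,*)=E\in\CC'$ automatically), the equivalence is represented by $A$, so $H\cong H_A$, $T\cong T_A$, and $[M\opp,\Set]=\CC_A$, $[E\opp,\Set]=\CC^A$. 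Because $H\cong H_A=[A,-]$ is an equivalence it is in particular faithful, preserves and reflects all limits and colimits; it follows that $A$ is a generator (since $[A,-]$ is faithful), that $E\cong[A,A]$ is the endomorphism monoid of $A$, and that $A$ is projective (since $[A,-]$ preserves epimorphisms, being part of an equivalence). For cyclicity: as $A$ is a projective generator, $M$ is a retract of a coproduct $A^{(I)}$, but more is true---since $H_A$ is an equivalence it preserves coproducts, so $A$ is self--small, hence indecomposable by Lemma \ref{indec}; an indecomposable projective $M$-act is cyclic by \cite[Corollary 2.3.17 or Theorem 2.3.16]{KKM}. This gives the cyclic projective generator $A$ with endomorphism monoid $E$, as required, and $H=H_A$, $T=T_A$ up to the identifications already made.

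The main obstacle I expect is pinning down pull back flatness of the left $E$-act $A$ in the "if" direction: one must translate "$A$ is a projective object of $[M\opp,\Set]$ and $E=\End(A)$" into the statement that $T_A=(-\otimes_E A)$ commutes with pull backs, which is exactly the hypothesis needed to apply Lemma \ref{wprojact}. For $A$ a cyclic projective $M$-act this should be the assertion that $A$ is a retract of $M$ (a cyclic projective act is a retract of $M$ by \cite[Theorem 2.3.16]{KKM}), whence ${}_EA$ is a retract of the free left $E$-act on one generator---more precisely ${}_EA$ is a generator for left $E$-acts---and retracts of the flat act $M$ are pull back flat by \cite[Chapter 3]{KKM}. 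A secondary routine point is verifying that the various "preserves/reflects" properties of an equivalence give exactly generator, projectivity, and the endomorphism-monoid identification; these are standard but should be stated carefully. Once pull back flatness is secured, everything else is an assembly of Lemmas \ref{genact}, \ref{indec}, \ref{wprojact}, Theorem \ref{repres} and Corollary \ref{tisff}.
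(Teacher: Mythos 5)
Your proposal is correct and follows essentially the same route as the paper: Theorem \ref{repres} for the ``only if'' direction, and Lemma \ref{genact}, Proposition \ref{wstaract}, Theorem \ref{staract} and Lemma \ref{wprojact} for the ``if'' direction. The one point you flag as the main obstacle, pull back flatness of the left $E$-act $A$, is settled in the paper much as you anticipate, by citing that the left $E$-act $A$ is projective (\cite[Corollary 3.18.17]{KKM}) and hence strongly flat (\cite[Proposition 3.15.5]{KKM}); note only that your justification of weak self--projectivity should run through ``$H_A$ preserves epimorphisms because $A$ is projective'' rather than through $T_A(g)$ being split epic, which it need not be.
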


\begin{proof} First note that a projective act is indecomposable if and
only if it is cyclic in virtue of \cite[Propositions 1.5.8 and
3.17.7]{KKM}.

If $H:[M\opp,\Set]\rightleftarrows[E\opp,\Set]:T$ are mutually
inverse equivalences, then $H\cong H_A$ and $T\cong T_A$, where
$A=T(E)$ according to Theorem \ref{repres}. Moreover the
endomorphism monoid of $A$ is $E$, and $A$ has to be projective,
indecomposable and generator together with $E$.

Conversely if $A$ is indecomposable and projective in
$[M\opp,\Set]$ then it is a weak $*$-act by Proposition
\ref{wstaract}.  Since $A$ is in addition a generator, Lemma
\ref{genact} tell us that $A$ is a $*$-act and
$\CC_A=\CG_A=[M\opp,\Set]$ and Theorem \ref{staract} implies that
$A$ is a $*$-act. Finally the left $E$-act $A$ is projective by
\cite[Corollary 3.18.17]{KKM}, so it is strongly flat by
\cite[Proposition 3.15.5]{KKM}, that means $T_A$ commutes both
with pull backs and equalizers. Thus $\CG^A=[E\opp,\Set]$,
according to Lemma \ref{wprojact}.
\end{proof}

\end{document}